\newtheorem{Theorem}{Theorem}[section]
\newtheorem{Proposition}[Theorem]{Proposition}
\newtheorem{Lemma}[Theorem]{Lemma}
\newtheorem{Corollary}[Theorem]{Corollary}
\theoremstyle{definition}
\newtheorem{Definition}[Theorem]{Definition}
\newtheorem{Remark}[Theorem]{Remark}
\newcommand{\bTheorem}[1]{
\begin{Theorem} \label{T#1} }
\newcommand{\eT}{\end{Theorem}}
\newcommand{\bProposition}[1]{
\begin{Proposition} \label{P#1}}
\newcommand{\eP}{\end{Proposition}}
\newcommand{\bLemma}[1]{
\begin{Lemma} \label{L#1} }
\newcommand{\eL}{\end{Lemma}}
\newcommand{\bCorollary}[1]{
\begin{Corollary} \label{C#1} }
\newcommand{\eC}{\end{Corollary}}
\newcommand{\bRemark}[1]{
\begin{Remark} \label{R#1} }
\newcommand{\eR}{\end{Remark}}
\newcommand{\bDefinition}[1]{
\begin{Definition} \label{D#1} }
\newcommand{\eD}{\end{Definition}}
\newcommand{\Ds}{\mathbb{D}_x}
\newcommand{\Ecd}{\mathcal{E}_{{\rm cg}}}
\newcommand{\tvm}{\widetilde{\vc{m}}}
\newcommand{\bfphi}{\boldsymbol{\varphi}}
\newcommand{\bFormula}[1]{
\begin{equation} \label{#1}}
\newcommand{\eF}{\end{equation}}
\newcommand{\Ov}[1]{\overline{#1}}
\newcommand{\vr}{\varrho}
\newcommand{\tvr}{\tilde \vr}
\newcommand{\tvu}{{\tilde \vu}}
\newcommand{\vu}{\vc{u}}
\newcommand{\vm}{\vc{m}}
\newcommand{\vn}{\vc{n}}
\newcommand{\vc}[1]{{\bf #1}}
\newcommand{\Div}{{\rm div}_x}
\newcommand{\Grad}{\nabla_x}
\newcommand{\dx}{\,{\rm d} {x}}
\newcommand{\dt}{\,{\rm d} t }
\newcommand{\intO}[1]{\int_{\Omega} #1 \ \dx}
\newcommand{\D}{{\rm d}}
\newcommand{\ep}{\varepsilon}
\def\softd{{\leavevmode\setbox1=\hbox{d}%
          \hbox to 1.05\wd1{d\kern-0.4ex{\char039}\hss}}}
\definecolor{Cgrey}{rgb}{0.85,0.85,0.85}
\definecolor{Cblue}{rgb}{0.50,0.85,0.85}
\definecolor{Cred}{rgb}{1,0,0}
\definecolor{fancy}{rgb}{0.10,0.85,0.10}
\newcommand\Cbox[2]{%
    \newbox\contentbox%
    \newbox\bkgdbox%
    \setbox\contentbox\hbox to \hsize{%
        \vtop{
            \kern\columnsep
            \hbox to \hsize{%
                \kern\columnsep%
                \advance\hsize by -2\columnsep%
                \setlength{\textwidth}{\hsize}%
                \vbox{
                    \parskip=\baselineskip
                    \parindent=0bp
                    #2
                }%
                \kern\columnsep%
            }%
            \kern\columnsep%
        }%
    }%
    \setbox\bkgdbox\vbox{
        \color{#1}
        \hrule width  \wd\contentbox %
               height \ht\contentbox %
               depth  \dp\contentbox
        \color{black}
    }%
    \wd\bkgdbox=0bp%
    \vbox{\hbox to \hsize{\box\bkgdbox\box\contentbox}}%
    \vskip\baselineskip%
}
\newcommand{\mbb}{\mathbb}
\newcommand{\mc}{\mathcal}
\newcommand{\wtilde}{\widetilde}
\newcommand{\oline}{\overline}
\newcommand{\ra}{\rightarrow}
\newcommand{\g}{\gamma}
\newcommand{\de}{\delta}
\newcommand{\N}{\mathbb{N}}
\def\d{\partial}
\date{\small \today}
\begin{document}


\title{Statistical solutions to the barotropic Navier--Stokes system}

\author{Francesco Fanelli
\thanks{The work of F.F. has been partially supported by the LABEX MILYON (ANR-10-LABX-0070) of Universit\'e de Lyon, within the program ``Investissement d'Avenir''
(ANR-11-IDEX-0007),  and by the projects BORDS (ANR-16-CE40-0027-01) and SingFlows (ANR-18-CE40-0027), all operated by the French National Research Agency (ANR).}
\and Eduard Feireisl
\thanks{The work of E.F. was partially supported by the
Czech Sciences Foundation (GA\v CR), Grant Agreement
18--05974S. The Institute of Mathematics of the Academy of Sciences of
the Czech Republic is supported by RVO:67985840.}
}


\maketitle

\centerline{Institut Camille Jordan UMR CNRS 5208, Universit\'e Claude Bernard Lyon 1;}
\centerline{43, Boulevard du 11 novembre 1918, F-69622 Villeurbanne, France}
\centerline{\texttt{fanelli@math.univ-lyon1.fr}}

\bigbreak
\bigbreak
\centerline{Institute of Mathematics of the Academy of Sciences of the Czech Republic;}
\centerline{\v Zitn\' a 25, CZ-115 67 Praha 1, Czech Republic}
\medbreak
\centerline{Institute of Mathematics, Technische Universit\"{a}t Berlin,}
\centerline{Stra{\ss}e des 17. Juni 136, 10623 Berlin, Germany}
\centerline{\texttt{feireisl@math.cas.cz}}

\begin{abstract}

We introduce a new concept of \emph{statistical solution} in the framework of weak solutions to the barotropic Navier--Stokes system with
inhomogeneous boundary conditions. Statistical solution is a family $\{ M_t \}_{t \geq 0}$ of Markov operators on the set of probability measures $\mathfrak{P}[\mathcal{D}]$ on the data space $\mathcal{D}$
containing the initial data $[\vr_0, \vm_0]$ and the boundary data $\vc{d}_B$.
\begin{itemize}
\item
$\{ M_t \}_{t \geq 0}$ possesses
a.a. semigroup property,
\[
M_{t + s}(\nu) = M_t \circ M_s(\nu) \ \mbox{for any}\ t \geq 0, \ \mbox{a.a.}\ s \geq 0, \ \mbox{and any}\ \nu \in \mathfrak{P}[\mathcal{D}].
\]
\item
$\{ M_t \}_{t \geq 0}$ is deterministic when restricted to deterministic data, specifically
\[
M_t( \delta_{[\vr_0, \vm_0, \vc{d}_B]}) = \delta_{[\vr(t, \cdot), \vm(t, \cdot), \vc{d}_B]},\ t \geq 0,
\]
where $[\vr, \vm]$ is a finite energy weak solution of the Navier--Stokes system
corresponding to the data $[\vr_0, \vm_0, \vc{d}_B] \in \mathcal{D}$.
\item
$M_t: \mathfrak{P}[\mathcal{D}] \to \mathfrak{P}[\mathcal{D}]$ is continuous in a suitable Bregman--Wasserstein metric at measures supported by the data giving rise to regular solutions.

\end{itemize}

\end{abstract}

{\bf Keywords:}  Statistical solution, compressible Navier--Stokes system, Markov semigroup

\medbreak
{\bf MSC:}  35R60, 35Q30, 37A50
\bigskip

\section{Introduction} \label{s:intro}

In view of the large number of recent results concerning ill--posedness of some iconic problems in fluid mechanics,
see the survey by Buckmaster and Vicol \cite{BucVic} or
\cite{BucVic1}, the \emph{statistical solutions}, introduced in the pioneering work of Foias \cite{Foias},
and Vishik and Fursikov \cite{VisFur} and revisited recently by Foias et al. \cite{BiFoMoTit}, \cite{FMRT}, \cite{FoMoTi}, \cite{FoRoTe3},
\cite{FoRoTe2}, \cite{FoRoTe1}, reappeared as a possible alternative to establish well--posedness in a broader
sense. Flandoli and Romito \cite{FlaRom} exploited this idea proving the existence of Markov selection for the stochastically driven
incompressible Navier--Stokes system. Indeed, in the absence of the white noise forcing, the Markov selection obtained in \cite{FlaRom} may be viewed as a statistical solution of the problem with random initial data in the sense of Vishik and Fursikov \cite{VisFur}. Recently, the method have been adapted to compressible fluid flows in \cite{BrFeHo2018C}. Alternatively, Cardona and Kapitanski \cite{CorKap} modified the method to
handle the deterministic problems obtaining a measurable semiflow selection for a rather general class of evolutionary problems.
Other aspects and problems related to statistical solutions to the incompressible Navier--Stokes system have been studied
by Constantin and Wu \cite{ConWu}, Levant, Ramos, and Titi \cite{LeRaTi}, among others. For recent applications to conservation laws and Euler equations
see Fjordholm, Lanthaler and Mishra \cite{Fjord-L-M}, and Fjordholm and Wiedemann \cite{Fjord-W}.

Our goal is to develop a new concept of \emph{statistical solution} in the context of compressible (barotropic) Newtonian fluids. As the existence of global in time smooth solutions is not known (but still not excluded), the theory is based on the concept of weak solution
in the spirit of Lions \cite{LI4} and \cite{EF70}. The leading idea is that the statistical solutions should share some properties
typical for the well--posed problems:

\begin{itemize}

\item Statistical solutions are interpreted as a Markov semigroup $\{ M_t \}_{t \geq 0}$ of operators assigning to the initial distribution of the data the distribution of the solution at any time $t > 0$.

\item The distribution measure should be supported by smooth solutions as soon as they exist.

\item If the initial distribution is given by a Dirac mass in the data space, its time evolution is a Dirac mass supported by a
weak solution of the problem. In particular, the framework of weak solutions is included as a special case.

\end{itemize}

Last but not least, we show certain stability of strong solutions in the spirit of the weak--strong uniqueness principle known for the deterministic solutions.
To this end, we introduce a new concept of \emph{Bregman--Wasserstein distance} in the spirit of Guo et al. \cite{GuHoLiYa} using the relative energy as cost functional.

\subsection{Problem formulation}
\label{i}

Let $\Omega$ be a bounded domain in $R^d$ (for $d=2,3$). The motion of a compressible viscous fluid contained in $\Omega$
is described by the mass density $\vr = \vr(t,x)$, and the velocity $\vu(t,x)$, $t > 0$, $x \in \Omega$. With the thermal
effects neglected, and with the linear dependence of the viscous stress $\mathbb{S}$ on the velocity gradient $\Grad \vu$,
the time evolution of the fluid is described by the \emph{barotropic Navier--Stokes system}:
\begin{equation} \label{i1}
\begin{split}
\partial_t \vr + \Div (\vr \vu) &= 0,\\
\partial_t (\vr \vu) + \Div (\vr \vu \otimes \vu) + \Grad p(\vr) &=
\Div \mathbb{S}(\Ds \vu) + \vr \vc{g},
\end{split}
\end{equation}
with the viscous stress tensor
\[
\mathbb{S}(\Ds \vu) = \mu \left( \Grad \vu + \Grad^t \vu - \frac{2}{d} \Div \vu \mathbb{I} \right) +
\lambda \Div \vu \mathbb{I},\ \mbox{where}\ \mu > 0, \ \lambda \geq 0,\ \mbox{with}\ \Ds \vu \equiv \frac{1}{2} \Big( \Grad \vu + \Grad^t \vu \Big).
\]

We impose the physically relevant in/out flow boundary conditions for the velocity:
\begin{equation} \label{i2}
\vu|_{\partial \Omega} = \vu_B .
\end{equation}
Accordingly, we can decompose the boundary $\d\Omega$ as
\[
\partial \Omega = \Gamma_{\rm in} \cup \Gamma_{\rm out}
\]
where
\[
\Gamma_{\rm in} := \left\{ x \in \partial \Omega \ \Big|\
\ \mbox{the outer normal}\ \vc{n}(x) \ \mbox{exists, and}\ \vu_B(x) \cdot \vc{n}(x) < 0 \right\}.
\]
In addition, the density is given on the inflow part of the boundary:
\begin{equation} \label{i3}
\vr|_{\Gamma_{\rm in}} = \vr_B.
\end{equation}
The initial state is given by the initial conditions:
\begin{equation} \label{i4}
\vr(0, \cdot) = \vr_0, \ \ \big(\vr \vu\big) (0,\cdot) = \vm_0\,.
\end{equation}

We refer to
the quantity $[\vr_0, \vm_0, \vr_B, \vu_B, \vc{g}]$ as (given) \emph{data}. Ideally, the solution $[\vr, \vu]$ is determined uniquely by the data, however, the recent state--of--the--art of the mathematical theory does not provide a positive answer to this conjecture,
except for short time intervals and/or smooth or small data in some particular cases, see Bothe and Pr\"uss \cite{BotPru}, Matsumura and Nishida
\cite{MANI1}, \cite{MANI}, Valli and Zajaczkowski \cite{VAZA}, among others. A suitable platform for studying the global properties of the
system \eqref{i1}--\eqref{i4} is the theory of \emph{weak solutions}, where the existence of global--in--time solutions have been established recently by Chang, Jin, and Novotn\' y \cite{ChJiNo}. In view of the recent results obtained via the method of convex integration, see Buckmaster et al. \cite{BucVic1}, \cite{BuDeSzVi}, a proper concept of weak solution must be accompanied by the associated \emph{energy balance} that guarantees in particular stability of strong solutions in the class of weak solutions (weak--strong uniqueness principle), cf. Brenier et al. \cite{BrDeSz}, Germain \cite{Ger}, and, more recently \cite{AbbFeiNov}.

The energy balance associated to the system \eqref{i1}--\eqref{i4} reads
\begin{equation} \label{i5}
\begin{split}
\frac{\D}{\dt} &\intO{\left[ \frac{1}{2} \vr \left|\vu - \vu_B\right|^2 + P(\vr) \right] }  +
\intO{ \mathbb{S}(\Ds \vu) : \Ds \vu }  \\
&+ \int_{\Gamma_{\rm out}} P(\vr)  \vu_B \cdot \vc{n} \ \D S_x   +
\int_{\Gamma_{\rm in}} P(\vr_B)  \vu_B \cdot \vc{n} \ \D S_x
\\	
&\leq -
\intO{ \left[ \vr \vu \otimes \vu + p(\vr) \mathbb{I} \right]  :  \Grad \vu_B }  - \intO{ {\vr} \vu  \cdot \vu_B \cdot \Grad \vu_B  }
\dt\\ &\ \ \  + \intO{ \mathbb{S}(\Ds \vu) : \Ds \vu_B } + \intO{ \vr \vc{g} \cdot (\vu - \vu_B) },
\end{split}
\end{equation}
where $P(\vr)$ is the pressure potential determined modulo a linear function of $\vr$ by the identity
\[
P'(\vr) \vr - P(\vr) = p(\vr).
\]
The inequality in \eqref{i5} is usually attributed to possible ``anomalous'' energy dissipation inherent to \emph{weak} solutions.
The total energy
\[
\mathcal{E} \equiv  \intO{\left[ \frac{1}{2} \vr \left|\vu - \vu_B\right|^2 + P(\vr) \right] },
\ \mbox{or rather its c\` agl\` ad version}\ \Ecd,
\]
considered as an auxilliary state variable will play a crucial role in the forthcoming analysis.

\begin{Remark} \label{MR3}

The energy can be defined in terms of the density and momentum $\vm\equiv \vr\vu$ that are weakly continuous quantities in time:
\[
E \left(\vr, \vm \ \Big| \vu_B \right) \equiv \left[ \frac{1}{2} \vr |\vu - \vu_B|^2 + P(\vr) \right] =
\left[ \frac{1}{2} \frac{|\vm|^2}{\vr} - \vm \cdot \vu_B + \frac{1}{2} \vr |\vu_b|^2 + P(\vr) \right].
\]
Moreover, with the convention
\[
E \left(\vm, \vu \ \Big| \vu_b \right) = \infty \ \mbox{if}\ \vr < 0 \ \mbox{or}\ \vr = 0, \vm \ne 0,\
E \left(\vm, \vu \ \Big| \vu_b \right) = 0 \ \mbox{if}\ \vr = 0, \ \vm = 0,
\]
$E$ is a convex l.s.c. function of $[\vr, \vm] \in R^{d + 1}$.

\end{Remark}

\subsection{Semiflow selection and pushforward measure}

Statistical solution in the spirit of Foias et al. \cite{FoRoTe3} is a family of probability measures $\{ \mathcal{V}_t \}_{t \geq 0}$
defined on the state space associated to the solution $[\vr, \vu]$. As a matter of fact, it is more convenient to consider
the density $\vr = \vr(t,x)$ and the momentum $\vm(t,x) = (\vr \vu)(t,x)$ as the state variables, as they are weakly continuous
and therefore well defined as functions of the time variable.

Inspired by Cardona and Kapitanski \cite{CorKap}, we adopt the selection procedure proposed by Krylov \cite{KrylNV} to identify a semiflow selection assigning to given data
$[\vr_0, \vm_0, \vr_B, \vu_B, \vc{g}]$
a single trajectory
\[
t \in [0, \infty) \mapsto [\vr(t, \cdot), \vm(t, \cdot)](\vr_0, \vm_0, \vr_B, \vu_B, \vc{g})
\]
in a (Borel) measurable way. To make the notation more concise, we denote the boundary data as
\[
\vc{d}_B \equiv [\vr_B, \vu_B, \vc{g}].
\]
The idea is then to
define $\{ \mathcal{V}_t \}_{t > 0}$ via the associated pushforward measure, specifically,
\begin{equation} \label{i6}
\int \Phi (\vr, \vm; \vc{d}_B) \D \mathcal{V}_t (\vr, \vm, \vc{d}_B) =
\int \Phi \Big( [\vr, \vm] (t, \cdot) (\vr_0, \vm_0, \vc{d}_B), \vc{d}_B \Big) \D \mathcal{V}_0 (\vr_0, \vm_0, \vc{d}_B)
\end{equation}
for any bounded continuous function $\Phi$ defined on the phase space
associated to $[\vr, \vm, \vc{d}_B]$,
and a given measure $\mathcal{V}_0$ defined on the data space. The mapping
\[
M_t : \mathcal{M}^+ \to \mathcal{M}^+,\ M_t [\mathcal{V}_0] = \mathcal{V}_t,\ t \geq 0,
\]
represents a statistical solution of the Navier--Stokes system.

Unfortunately, such a procedure cannot be applied in a straightforward manner as the total energy $\mathcal{E}$ must be included in the state variables. This requires replacing $\mathcal{E}(t)$
by its c\` agl\` ad version $\Ecd$ - a BV function
determined through the energy inequality \eqref{i5} - whereas the identity
\[
\Ecd(t) = \mathcal{E}(t)
\equiv \intO{ \left[ \frac{1}{2} \vr \left|\frac{\vm}{\vr} - \vu_B \right|^2 + P(\vr) \right](t, \cdot) }
\]
holds only for a.a. $t \in [0, \infty)$. As a result, $\Ecd$ must be considered as ``independent'' state variable ranging
in a trajectory space that accommodates its pointwise values in time. We adopt a variant of the Skorokhod topology
${D}([0, \infty); R)$ as a trajectory space for the energy, see Jakubowski \cite{Jaku}. Finally, we remark that a similar procedure
in the context of the barotropic Navier--Stokes system with homogeneous boundary conditions has been performed by Basari\v c \cite{Basa1},
where the phase space for the energy is taken $L^1_{\rm loc}[0, \infty)$. The Skorokhod topology seems more convenient as the pointwise in time values of the total energy are well defined while they correspond merely to the Lebesgue points in the $L^1_{\rm loc}$ setting.

Following the above delineated strategy, we first identify a semiflow selection that assigns to any data
$(\vr_0, \vm_0, \vc{d}_B)$ and $\mathcal{E}_0$ a unique solution trajectory
\[
\begin{split}
U: \Big( t; (\vr_0, \vm_0, \mathcal{E}_0, \vc{d}_B) \Big) &\mapsto [\vr(t, \cdot), \vm(t,\cdot), \Ecd(t), \vc{d}_B],\\
U \Big(t + s; (\vr_0, \vm_0, \mathcal{E}_0, \vc{d}_B ) \Big) &=
U \left( t;  U \Big(s;(\vr_0, \vm_0, \mathcal{E}_0, \vc{d}_B ) \Big) \right) \ \mbox{for any}\ t,s \geq 0.
\end{split}
\]
The associated pushforward measure can be defined analogously to \eqref{i6},
\[
\int \Phi (\vr, \vm, \Ecd, \vc{d}_B) \D \mathcal{V}_t (\vr, \vm, \Ecd, \vc{d}_B) =
\int \Phi \Big( [\vr, \vm, \Ecd] (t, \cdot) (\vr_0, \vm_0, \mathcal{E}_0, \vc{d}_B), \vc{d}_B \Big) \D \mathcal{V}_0 (\vr_0, \vm_0, \mathcal{E}_0, \vc{d}_B)
\]
for any bounded Borel measurable function $\Phi$ on the data space. Note that
the trajectories consist of \emph{three} components - $[\vr, \vm]$, and $\mathcal{E}$.  Finally, seeing that
\[
\Ecd(t) =
\intO{ \left[ \frac{1}{2} \vr \left|\frac{\vm}{\vr} - \vu_B \right|^2 + P(\vr) \right](t, \cdot) }
\ \mbox{for a.a.}\ t \in [0, \infty)
\]
we deduce the associated energy balance for \emph{a.a.} $t \in [0, \infty)$ similarly to Foias et al. \cite{FoRoTe3}, or
Vishik and Fursikov \cite{VisFur}. Accordingly, the associated family of Markov operators $\{ M_t \}_{t \geq 0}$
that assigns $\mathcal{V}_t$ to $\mathcal{V}_0$ will enjoy the semigroup property only a.a., specifically,
\[
M_{t + s}(\mc V) = M_t \circ M_s (\mc V) \ \mbox{for all}\ t \ \mbox{and a.a.}\ s \ \mbox{including}\ s = 0,
\]
where the exceptional set of times $s$ depends on the measure $\mc V$.

The paper is organized as follows:

\begin{itemize}

\item In Section \ref{M}, we recall the necessary material concerning the finite energy weak solutions to the barotropic Navier--Stokes system.

\item In Section \ref{SM}, we introduce the concept of
statistical solutions and state our main result.

\item Section \ref{ET} is devoted to the existence theory for the barotropic Navier--Stokes system. Although the basic existence theorem
is proved by Chang, Jin, and Novotn\' y \cite{ChJiNo} (cf. also Lions \cite[Chapter 7, Section 7.6]{LI4}), our definition of weak solution is slightly different and requires certain modifications in the proofs.

\item In Section \ref{S}, we show the existence of a semiflow selection, in particular the continuity of the energy in the
Skorokhod space $D([0, \infty); R)$.

\item Section \ref{s:ST} contains the proof of the main results.

\item The paper is concluded by a short discussion in Section \ref{C}. In particular, we show that the statististical solutions are continuous at measures supported by regular data that may be seen as an analogue of the weak--strong uniqueness property for deterministic solutions.

\end{itemize}

\section{Weak solutions to the Navier--Stokes system}
\label{M}

We start by specifying the hypotheses on the equation of state (EOS for brevity), which links the pressure to the density.
We consider a pressure $p$ such that
\begin{equation}  \label{MH2}
\begin{split}
&p \in C^1[0, \infty),\ p(0) = 0, \\ 
& \ p'(\vr) > 0 \ \mbox{for}\ \vr > 0\qquad \mbox{ and }\qquad \underline{p} \vr^{\gamma - 1} \leq p'(\vr) \leq \Ov{p} \vr^{\gamma - 1}  \ \mbox{for all}\ \vr > 1, 
\end{split}
\end{equation}
for some $\g>1$ and two constants $0<\underline{p}\leq\oline p$. The associated pressure potential $P$,
\[
P'(\vr) \vr - P(\vr) = p(\vr),\ \ \mbox{is normalized by setting}\ P(0) = 0.
\]
In particular,
\[
P''(\vr) = \frac{p'(\vr)}{\vr} > 0 \ \mbox{for}\ \vr > 0\ \Rightarrow \
P \ \mbox{is (strictly) convex.}
\]
Accordingly, we
may assume that either
\[
P'(\vr) \to - \infty \ \mbox{if} \ \vr \to 0^+\,,\qquad \ \mbox{ or }\qquad \ P'(\vr) \to 0 \ \mbox{if}\ \vr \to 0^+,
\]
adding a linear function to $P$ in the latter case, if necessary.

To avoid technical difficulties, we suppose that $\partial \Omega$ is smooth of class $C^2$. Similarly, we consider
\begin{equation} \label{MH3}
\vu_B = \vu_B|_{\partial \Omega},\ \vu_B \in C^1(\Ov{\Omega}; R^d),\
\vr_B \in C(\partial {\Omega}),\ \vr_B \geq \underline{\vr} > 0,
\end{equation}
where $\underline{\vr}$ is a positive constant. These hypotheses allow us to use the available existence theory developed in
Chang, Jin, and Novotn\' y \cite{ChJiNo}.
As a matter of fact, they could be considerably relaxed in the spirit of \cite{AbbFeiNov}.

\subsection{Weak solutions, energy inequality}
\label{MS1}

Having collected the necessary preliminary material, we introduce the concept of \emph{finite energy weak solution}.

\begin{Definition} [Finite energy weak solution] \label{MD1}

Let $\Omega \subset R^d$, $d=2,3$ be a bounded domain of class $C^2$. Let $[\vr_B, \vu_B]$ belong to the class \eqref{MH3},
and let
\[
\vc{g} \in C(\Ov{\Omega}; R^d).
\]

We say that $[\vr, \vm]$ is a \emph{finite energy weak solution} of the problem \eqref{i1}--\eqref{i4}
with the total energy $\Ecd$ in $[0, \infty)
\times \Omega$, and the initial data $[\vr_0, \vm_0, \mathcal{E}_0]$, if the following is satisfied:

\begin{itemize}

\item {\bf Regularity class.}
\[
\begin{split}
\vr &\in C_{{\rm weak,loc}}([0, \infty); L^\gamma (\Omega)) \cap
L^\gamma_{\rm loc}(0,T; L^\gamma (\partial \Omega, |\vu_B \cdot \vc{n}| \dx )),\ \vr \geq 0,\\
\vm &\in C_{{\rm weak,loc}}([0, \infty); L^{\frac{2 \gamma}{\gamma + 1}}(\Omega; R^d)),\\
\vm &= \vr \vu \ \mbox{a.a.},\ \ \mbox{where }\ (\vu - \vu_B) \in L^2_{\rm loc}([0,\infty); W^{1,2}_0(\Omega; R^d)),\\
\Ecd &\in BV_{\rm loc}[0, \infty), \ \mbox{c\` agl\` ag},\ \Ecd (0-) \equiv \mathcal{E}_0.
\end{split}
\]

\item
{\bf Equation of continuity.}

\begin{equation} \label{M1}
\begin{split}
- \intO{ \vr_0 \varphi }  &+
\int_0^\infty \int_{\Gamma_{\rm out}} \varphi \vr \vu_B \cdot \vc{n} \ \D \ S_x
+
\int_0^\infty \int_{\Gamma_{\rm in}} \varphi \vr_B \vu_B \cdot \vc{n} \ \D \ S_x\\ &=
\int_0^\infty \intO{ \Big[ \vr \partial_t \varphi + \vr \vu \cdot \Grad \varphi \Big] } \dt,
\end{split}
\end{equation}
holds for any $\varphi \in C^1_c([0,\infty) \times \Ov{\Omega})$. In addition, a renormalized version of \eqref{M1}:
\begin{equation} \label{M1a}
- \intO{ b(\vr_0) \varphi }  =
\int_0^\infty \intO{ \Big[ b(\vr) \partial_t \varphi + b(\vr) \vu \cdot \Grad \varphi -
\Big( b'(\vr) \vr - b(\vr) \Big) \Div \vu \Big] } \dt
\end{equation}
holds for any
$\varphi \in C^1_c([0,\infty) \times {\Omega})$, and any $b \in C^1[0, \infty)$, $b' \in C_c[0, \infty)$.

\item
{\bf Momentum equation.}
\begin{equation} \label{M2}
- \intO{ \vm_0 \cdot \bfphi } =
\int_0^\infty \intO{ \Big[ \vr \vu \cdot \partial_t \bfphi + \vr \vu \otimes \vu : \Grad \bfphi
+ p(\vr) \Div \bfphi - \mathbb{S}(\Ds \vu) : \Grad \bfphi + \vr \vc{g} \cdot \bfphi \Big] }
\end{equation}
holds for any $\bfphi \in C^1_c([0,\infty) \times {\Omega}; R^d)$.

\item {\bf Total energy balance.}

The total energy
\begin{equation} \label{M3a}
\Ecd \in BV_{\rm loc}[0, \infty),\
\Ecd(t) = \intO{ \left[ \frac{1}{2}\vr \left| \frac{\vm}{\vr} - \vu_B \right|^2 + P(\vr) \right](t, \cdot)}
\ \mbox{for a.a.}\ t \in [0, \infty)
\end{equation}
satisfies
\begin{equation} \label{M3}
\begin{split}
&- \int_0^\infty \partial_t \psi \Ecd \dt  +
\int_0^\infty \psi \intO{ \mathbb{S}(\Ds \vu) : \Ds \vu } \dt \\
&+ \int_0^\infty \psi \int_{\Gamma_{\rm out}} P(\vr)  \vu_B \cdot \vc{n} \ \D S_x \dt  +
\int_0^\infty \psi \int_{\Gamma_{\rm in}} P(\vr_B)  \vu_B \cdot \vc{n} \ \D S_x \dt
\\	
&\leq
\psi(0) \mathcal{E}_0  \\
&-
\intO{ \int_0^\infty \psi \left[ \vr \vu \otimes \vu + p(\vr) \mathbb{I} \right]  :  \Grad \vu_B } \dt  -
\int_0^\infty \psi \intO{ {\vr} \vu  \cdot \vu_B \cdot \Grad \vu_B  }\dt
\\ &+ \int_0^\infty \psi \intO{ \mathbb{S}(\Ds \vu) : \Ds \vu_B }\dt  + \int_0^\infty \psi \intO{ \vr \vc{g} \cdot (\vu -
\vu_B) } \dt
\end{split}
\end{equation}
for any
$\psi \in C^1_c[0, \infty)$, $\psi \geq 0$.
\end{itemize}

\end{Definition}

As the functions $\vr$ and $\vm$ are weakly continuous in time, we have
\begin{equation} \label{MR4}
\Ecd(t \pm) \geq \mathcal{E}(t) \equiv \intO{ E \left( \vr, \vm \Big| \vu_B \right) (t, \cdot) }
\ \mbox{for any} \ t \in [0, \infty), \ \mbox{with the convention}\ \mathcal{E}(0-) = \mathcal{E}_0.
\end{equation}
Moreover,
\begin{equation} \label{MR5}
\begin{split}
\Ecd(t \pm) &= \mathcal{E}(t) = \intO{ E \left( \vr, \vm \Big| \vu_B \right) (t, \cdot) }\\
\ \mbox{at any Lebesgue point}\ &t \in (0, \infty) \ \mbox{of the function}\
\mathcal{E} \in L^\infty_{\rm loc}[0, \infty).
\end{split}
\end{equation}

\begin{Remark} \label{RM4}

Note that all the times $t$ for which \eqref{MR5} holds are automatically points of continuity of $\Ecd \in BV_{\rm loc}[0, \infty)$.

\end{Remark}

Definition \ref{MD1} is more in the spirit of Lions \cite{LI4} than Chang, Jin, and Novotn\' y \cite{ChJiNo}. The main point is
the weak formulation of the equation of continuity \eqref{M1} that includes also the ``trace'' of $\vr$ on $\partial \Omega$.
Here, $\vr|_{\partial \Omega}$ is understood as
\begin{equation} \label{trace}
(\vr|_{\partial \Omega}) \vu_B \cdot \vc{n} = \vm \cdot \vc{n} \ \mbox{on}\ \partial \Omega,
\end{equation}
where the traces of both $\vu_B$ and $\vm \cdot \vc{n}$ are well defined, cf. Chen, Torres, Ziemer \cite{ChToZi}. With this convention, we have
\[
\vr|_{\Gamma_{\rm in}} = \vr_B.
\]
We point out that $\vr$ belongs {\it a priori} only to the Lebesgue space $L^\gamma(\Omega)$ for which the boundary trace is not well defined. Relation \eqref{trace} is therefore understood as a definition of the trace of $\vr$ on the set where $\vu_B \cdot \vc{n} \ne 0$.

\section{Main result}
\label{SM}

Some preliminaries are needed before we state our main result. To begin with, it is convenient for the
trajectories $t \mapsto [\vr, \vm](t, \cdot)$ to range in a \emph{Polish space} rather than
$L^\gamma \times L^{\frac{2 \gamma}{\gamma + 1}}$ endowed with the (non--metrizable) weak topology. One possibility is to adapt the
approach of Cardona and Kapitanski \cite{CorKap} based on considering the topology of the injective limit of bounded
(weakly metrizable) balls in $L^\gamma \times L^{\frac{2 \gamma}{\gamma + 1}}$. Here, we opted for a simpler way replacing
$L^\gamma \times L^{\frac{2 \gamma}{\gamma + 1}}$ by a larger space
\[
W^{-k,2}(\Omega) \times W^{-k,2}(\Omega; R^d), \ k > \frac{d}{2} + 1.
\]
Note that
\[
W^{-k,2} (\Omega) = \left[ W^{k,2}_0(\Omega) \right]^* \ \mbox{is a separable Hilbert space},\
W^{k,2}_0 \hookrightarrow\hookrightarrow C^1(\Ov{\Omega}) \ \mbox{if}\ k > \frac{d}{2} + 1.
\]
We adopt the standard identification of $W^{k,2}_0$ as a subspace of $W^{-k,2}$ via the Riesz isometry,
\[
W^{k,2}_0 \hookrightarrow L^2 \approx (L^2)^* \hookrightarrow W^{-k,2}.
\]

As $W^{k,2}_0$ can be always identified with a domain of a suitable elliptic operator, we suppose there is an
$L^2-$orthonormal basis $\{ r_m \}_{m=1}^\infty$ of $W^{k,2}_0(\Omega)$ such that
\begin{equation} \label{Mr1}
\begin{split}
\left< \vr, s \right>_{W^{-k,2}(\Omega)} &= \sum_{m=1}^\infty \lambda_m^{-k/2} \left(\intO{ \vr r_m }\right)
\left(\intO{ s r_m } \right)\\ &\mbox{for a suitable sequence of eigenvalues}\ \lambda_m \to \infty.
\end{split}
\end{equation}
Similarly
\begin{equation} \label{Mr2}
\begin{split}
\left< \vm, \vc{v} \right>_{W^{-k,2}(\Omega; R^d)} &= \sum_{m=1}^\infty \Lambda_m^{-k/2} \left(\intO{ \vm \cdot \vc{w}_m }\right)
\left(\intO{ \vc{v} \cdot \vc{w}_i } \right)\\ &\mbox{for a suitable sequence of eigenvalues}\ \Lambda_m \to \infty,
\end{split}
\end{equation}
where $\{ \vc{w}_m \}_{m=1}^\infty$ is orthonormal in $L^2(\Omega; R^d)$.

Finally, we introduce the projections,
\[
\begin{split}
\vr_M &\equiv
\left[ \intO{ \vr r_1 }, \dots,\intO{ \vr r_M } \right], \vm_M \equiv \left[ \intO{ \vm \cdot \vc{w}_1 }, \dots,\intO{ \vm \cdot \vc{w}_M} \right]
\\ [\vr_M, \vm_M]  &\in W_M \approx R^{2M}.
\end{split}
\]

\subsection{Data space}

The data space must accommodate the initial conditions $[\vr_0, \vm_0]$ as well as
the boundary data $[\vr_B, \vu_B]$ and the driving force $\vc{g}$.
Accordingly, we introduce
\begin{equation} \label{SM2}
\begin{split}
\mathcal{D} = \Big\{ [\vr_0, \vm_0, \vr_B, \vu_B, \vc{g} ] \ \Big|  \ &
\vr_0 \in L^\gamma(\Omega), \vm_0 \in L^{\frac{2 \gamma}{\gamma + 1}}(\Omega; R^d),\ \intO{ E \left( \vr_0, \vm_0 \ \Big| \ \vu_B \right) } < \infty
\\ &\vr_B \in C(\partial {\Omega}),\ \vr_B \geq \underline{\vr} > 0,\ \vu_B \in C^1(\Ov{\Omega}; R^d),\
\vc{g} \in C(\Ov{\Omega};R^d)
 \Big\}
\end{split}
\end{equation}
- a Borel subset of the Polish space
\[
X_{\mathcal{D}} \equiv W^{-k,2}(\Omega) \times W^{-k,2}(\Omega; R^d) \times C({\partial \Omega}) \times
C^1(\Ov{\Omega}; R^d) \times C(\Ov{\Omega}; R^d).
\]
Indeed
\[
\begin{split}
\mathcal{D} = \cup_{N > 0} \Big\{ [\vr_0, \vm_0, \vr_B, \vu_B, \vc{g} ] \ \Big|  \ &
\vr_0 \in L^\gamma (\Omega), \vm_0 \in L^{\frac{2 \gamma}{\gamma + 1}}(\Omega; R^d),\ \intO{ E \left( \vr_0, \vm_0 \ \Big| \ \vu_B \right) } \leq N
\\ &\vr_B \in C(\Ov{\Omega}),\ \vr_B \geq \underline{\vr},\ \vu_B \in C^1(\Ov{\Omega}; R^d),\
\vc{g} \in C(\Ov{\Omega}; R^d)
 \Big\}
\end{split}
\]
where the set on the right--hand side is a countable union of closed subsets of $X_{\mathcal{D}}$.

For notational convenience, from now on we will denote by $\vc{d}_B\in C(\d\Omega)\times C^1(\oline{\Omega};R^d)\times C(\oline{\Omega};R^d)$ the triplet of data $[\vr_B,\vu_B,\vc{g}]$.

\subsection{Statistical solution}

We are ready to introduce the concept of \emph{statistical solution} to the problem \eqref{i1}--\eqref{i4}. The statistical solution reflects the time evolution of the \emph{distribution} of the data and of the solution at later times. Accordingly, the best
way to describe the evolution of the initial data distribution is the semigroup of linear operators on the set of probability measures defined on the data space $\mathcal{D}$. We consider the set $\mathcal{D}$ as
a Borel subset of the space $X_{\mathcal{D}}$ and denote
\[
\mathfrak{P}[ \mathcal{D}] = \left\{ \nu \ \Big| \ \nu \ \mbox{a complete Borel probability measure on} \ X_{\mathcal{D}}, \
{\rm supp}[\nu] \subset \mathcal{D}
\right\}.
\]

\begin{Definition}[Statistical solution] \label{SD1}

A \emph{statistical solution} of the problem \eqref{i1}--\eqref{i4} is a family of (Markov) operators $\{ M_t \}_{t \geq 0}$,
\[
M_t : \mathfrak{P}[\mathcal{D}] \to \mathfrak{P}[\mathcal{D}] \ \mbox{for any}\ t \geq 0,
\]
enjoying the following properties:
\begin{itemize}
\item
\[
M_0 (\nu) = \nu \ \mbox{for any}\ \nu \in \mathfrak{P}[\mathcal{D}];
\]
\item
\[
M_t \left( \sum_{i=1}^N \alpha_i \nu_i \right) =
\sum_{i=1}^N \alpha_i M_t (\nu_i), \ \mbox{for any}\ \alpha_i \geq 0, \ \sum_{i=1}^N \alpha_i = 1,\
\nu_i \in \mathfrak{P}(\mathcal{D}), \ t \geq 0;
\]
\item
\begin{equation} \label{SSM1}
M_{t + s} = M_t \circ M_s \ \mbox{for any}\ t \geq 0 \ \mbox{and a.a.}\ s \in (0, \infty);
\end{equation}
\item
\begin{equation} \label{SSM1a}
t \mapsto M_t (\nu) \ \mbox{is continuous with respect to the weak topology on}\ \mathfrak{P}(\mathcal{D})
\end{equation}
for any $\nu \in \mathfrak{P}(\mathcal{D})$;
\item
\begin{equation} \label{SSM2}
\begin{split}
{M}_t \left( \delta_{[\vr_0, \vm_0, \vc{d}_B]} \right) &= \delta_{[\vr(t, \cdot),
\vm(t, \cdot), \vc{d}_B]} \ \mbox{for any}\ t \geq 0, \\
{M}_t \left( \nu \right) &= \int_{\mathcal{D}} \delta_{[\vr(t, \cdot),
\vm(t, \cdot), \vc{d}_B]} \D \nu (\vr_0, \vm_0, \vc{d}_B)\ \mbox{for any}\ \nu \in \mathfrak{P}[\mathcal{D}],
\end{split}
\end{equation}
where $[\vr, \vm]$ is a finite energy weak solution in the sense of Definition \ref{MD1}, with the data
$(\vr_0, \vm_0, \vc{d}_B)$ and
\[
\mathcal{E}_0 = \intO{ E \left( \vr_0, \vm_0 \ \Big| \vu_B \right) }.
\]
\end{itemize}

\end{Definition}

\begin{Remark} \label{Rcd1}

It follows from \eqref{SSM1a} that
the mapping
\[
M : [0, \infty) \times \mathcal{D} \to \mathcal{D}, \
(t, [\vr_0, \vm_0, \vc{d}_B]) \mapsto  M_t \left( \delta_{[\vr_0, \vm_0, \vc{d}_B]} \right)
= \delta_{[\vr(t, \cdot), \vm(t, \cdot), \vc{d}_B]} \approx
[\vr(t, \cdot), \vm(t, \cdot), \vc{d}_B]
\]
is $\left( [0, \infty) \times \mathcal{D}; \mathcal{D} \right)$ Borel measurable. Here we have identified the
data space $\mathcal{D}$ with a subspace of $\mathfrak{P}[\mathcal{D}]$,
\[
[\vr, \vm, \vc{d}_B] \in \mathcal{D} \approx \delta_{ [\vr, \vm, \vc{d}_B]} \in \mathfrak{P}[\mathcal{D}].
\]

\end{Remark}

\begin{Remark} \label{Rcd2}

The property \eqref{SSM2} anticipates the existence of a Borel measurable mapping on $\mathcal{D}$:
\[
\vc{U}(t): [\vr_0, \vm_0, \vc{d}_B] \mapsto [\vr(t, \cdot), \vm(t,\cdot), \vc{d}_B],
\]
where $[\vr, \vm]$ is a finite energy weak solution corresponding to the data
$(\vr_0, \vm_0, \vc{d}_B)$ and
\[
\mathcal{E}_0 = \intO{ E \left( \vr_0, \vm_0 \ \Big| \vu_B \right) }.
\]
The mapping
\[
\Phi \in BM (\mathcal{D}) \mapsto \Phi \circ \vc{U}(t) \in BM (\mathcal{D})
\]
is called the \emph{adjoint} of the Markov operator $M_t$.

\end{Remark}

The ``almost semigroup'' property \eqref{SSM1} should be understood in the following sense: For any
$\nu \in \mathfrak{P}[ \mathcal{D} ]$ there exists a set of $s \in (0, \infty)$ of full measure such that
\eqref{SSM1} holds. In general, the set of ``exceptional times'' depends on the measure $\nu$.

Thanks to the property \eqref{SSM2}, the statistical solution reduces to a weak solution provided the data are concentrated at one point.
Thus the framework is a proper extension of the standard concept of weak solution, contained as a special case. Statistical solutions share the same deficiency with the weak solutions -- they are not (known to be) uniquely determined by the initial data.
As we shall see in Section \ref{C}, however, uniqueness can be restored on the sets of the data that give rise to smooth solutions of the problem.

\subsection{Main result}

We are ready to state our main result.

\begin{Theorem}\label{ST1}

Let the pressure be given by the EOS \eqref{MH2}, with $\gamma > \frac{d}{2}$.
Let $\mathcal{D}$ be the data set introduced in \eqref{SM2}. Let $\mathcal{V}_0$ be a complete Borel measure on the
(separable) Banach space
\[
X_{\mathcal{D}} = W^{-k,2}(\Omega) \times W^{-k,2}(\Omega; R^d) \times C(\partial{\Omega}) \times C^1(\Ov{\Omega}; R^d) \times
C(\Ov{\Omega}; R^d)
\]
such that
\[
{\rm supp}[\mathcal{V}_0] \subset \mathcal{D}.
\]

Then the following holds:
\begin{itemize}

\item
There exists a mapping
\begin{equation} \label{SSM4}
[\vr, \vm]: [0, \infty) \times \mathcal{D} \to W^{-k,2}(\Omega) \times W^{-k,2}(\Omega; R^d),
\end{equation}
measurable with respect to $\dt \times \mathcal{V}_0$,
enjoying the following properties:
\begin{itemize}
\item
\begin{equation} \label{SSM4a}
[\vr, \vm] (0; \vr_0, \vm_0, \vc{d}_B) =  [\vr_0, \vm_0];
\end{equation}
\item
\begin{equation} \label{SSM4b}
t \mapsto [\vr, \vm] (t; \vr_0, \vm_0, \vc{d}_B) \in C_{\rm loc}([0, \infty); X_{\mathcal{D}})
\end{equation}
is a finite energy weak solution of the Navier--Stokes system \eqref{i1}--\eqref{i4} in the sense of Definition \ref{MD1}, with the data
\[
[\vr_0, \vm_0, \vc{d}_B] \in \mathcal{D}, \ \mbox{and the initial energy}\ \mathcal{E}_0 = \intO{ E\left( \vr_0, \vm_0 \ \Big|\ \vu_B \right) };
\]
\item
\begin{equation} \label{SSM4c}
[\vr, \vm] (t + s; \vr_0, \vm_0, \vc{d}_B) =
[\vr, \vm]\Big(t, [\vr, \vm] (s; \vr_0, \vm_0, \vc{d}_B), \vc{d}_B \Big)
\ \mbox{for any}\ t \geq 0 \ \mbox{and a.a.}\ s \in [0, \infty).
\end{equation}

\end{itemize}

\item

The family $\{ \mathcal{V}_t \}_{t \geq 0}$ of Borel measures on $X_{\mathcal{D}}$ defined
as
\begin{equation} \label{SSM5}
\begin{split}
\int_{X_{\mathcal{D}}} \Phi ( \vr, \vm , \vc{d}_B ) \D \mathcal{V}_t (\vr, \vm, \vc{d}_B) &\equiv
\int_{\mathcal{D}} \Phi \Big(\vr (t; \vr_0, \vm_0, \vc{d}_B) , \vm(t; \vr_0, \vm_0, \vc{d}_B), \vc{d}_B \Big) \D \mathcal{V}_0 (\vr_0, \vm_0,
\vc{d}_B)\\
&\mbox{for any}\ t \geq 0, \ \Phi \in BC (X_{\mathcal{D}}),
\end{split}
\end{equation}
where $[\vr, \vm]$ is the mapping introduced in \eqref{SSM4},
satisfies
\begin{equation} \label{SM4}
\begin{split}
- &\int_0^\infty \partial_t \psi (t) \left[ \int_{X_{\mathcal{D}}}
 \Phi \left( \vr_M , \vm_M , \intO{ {E} \left( \vr, \vm \Big| \vu_B \right)
} \right)   \D \mathcal{V}_t(\vr, \vm, \vc{d}_B)  \right]  \dt \\
&+ \int_{\mathcal{D}} \left[ \int_0^\infty \psi (t) \left( \frac{\partial \Phi }{\partial e} \left( \vr_M, \vm_M, \mathcal{E}
 \right) \intO{ \mathbb{S}(\Ds \vu) : \Ds \vu } \right) \dt \right] \D \mathcal{V}_0 (\vr_0, \vm_0, \vc{d}_B)
\\
&+ \int_{\mathcal{D}} \left[ \int_0^\infty \psi (t) \left( \frac{\partial \Phi }{\partial e}
\left( \vr_M , \vm_M , \mathcal{E}
\right) \int_{\Gamma_{\rm out}} P(\vr)  \vu_B \cdot \vc{n} \ \D S_x \right) \dt \right] \D \mathcal{V}_0(\vr_0, \vm_0, \vc{d}_B)
\\
&+ \int_{\mathcal{D}} \left[ \int_0^\infty \psi (t) \left( \frac{\partial \Phi }{\partial e}
\left( \vr_M , \vm_M , \mathcal{E}
\right) \int_{\Gamma_{\rm in}} P(\vr_B)  \vu_B \cdot \vc{n} \ \D S_x \right) \dt \right] \D \mathcal{V}_0(\vr_0, \vm_0, \vc{d}_B)
\\
&\leq \psi(0) \int_{\mathcal{D}} \Phi \left( \vr_{0,M}, \vm_{0,M} , \intO{ {E} \left( \vr_0, \vm_0 \ \Big| \ \vu_B \right) } \right)
\D \mathcal{V}_0 (\vr_0, \vm_0, \vc{d}_B)\\
&+ \int_\mathcal{D} \left[ \int_0^\infty \psi(t)  \left( \sum_{i=1}^M \frac{\partial \Phi}{\partial r_i }\left( \vr_M, \vm_M,
\mathcal{E}
 \right)  \intO{ \vr \vu \cdot \Grad r_i } \right) \dt \right] \D \mathcal{V}_0 (\vr_0, \vm_0, \vc{d}_B) \\
&+ \int_{\mathcal{D}} \left[ \int_0^\infty \psi(t) \left( \sum_{i = 1}^{M}  \frac{\partial \Phi}{\partial {w}_i }\left(\vr_M, \vm_M,
\mathcal{E}
 \right) \times \right. \right. \\ & \left. \left. \times \intO{ \Big[ \vr \vu \otimes \vu : \Grad \vc{w}_i
+ p(\vr) \Div \vc{w}_i - \mathbb{S}(\Ds \vu) : \Grad \vc{w}_i + \vr \vc{g} \cdot \vc{w}_i \Big] } \right) \dt \right]
\D \mathcal{V}_0(\vr_0, \vm_0, \vc{d}_B) \\
&+ \int_{\mathcal{D}} \left[ \int_0^\infty \psi(t) \left( \frac{\partial \Phi }{\partial e} \left( \vr_M, \vm_M, \mathcal{E}  \right) \left( - \intO{ \left[ \vr \vu \otimes \vu + p(\vr) \mathbb{I} \right]  :  \Grad \vu_B }  \right.
\right. \right. \\
&- \left. \left. \left. \intO{ \vr \vu  \cdot \vu_B \cdot \Grad \vu_B  } + \intO{ \mathbb{S}(\Ds \vu) : \Ds \vu_B }   + \intO{ \vr \vc{g} \cdot (\vu - \vu_B) } \right) \right) \dt \right] \D \mathcal{V}_0(\vr_0, \vm_0, \vc{d}_B)
\end{split}
\end{equation}
for any $\psi \in C^1_c[0, \infty)$, $\psi \geq 0$, for any $M\in\N$ and any
\[
\Phi = \Phi (\vc{r},\vc{w}, e), \ \Phi \in C^1 (R^M \times R^{M} \times R),\
\nabla \Phi \in BC (R^M \times R^{M} \times R; R^M \times R^{M} \times R),  \ \frac{\partial \Phi}{\partial e } \geq 0.
\]

\end{itemize}

\end{Theorem}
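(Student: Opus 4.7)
The plan is to first construct, in a measurable way, a single trajectory solving the Navier--Stokes system for each datum, and then define the statistical solution as the pushforward of the initial distribution $\mathcal{V}_0$ under that measurable map. The energy inequality \eqref{SM4} will then be obtained by integrating the trajectorywise energy--momentum--continuity identities against $\mathcal{V}_0$.

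\medskip

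\noindent\textbf{Step 1: measurable semiflow selection.} Following the strategy outlined in Section \ref{s:intro} (announced to be carried out in Section \ref{S}), I would apply the Krylov-type selection procedure in the spirit of Cardona--Kapitanski \cite{CorKap} to the set of finite energy weak solutions whose existence is guaranteed by Section \ref{ET}. The state variable must be the triple $[\vr,\vm,\Ecd]$, with $\Ecd$ the c\`agl\`ad version of the total energy, living in a suitable Skorokhod space $D([0,\infty);R)$ as indicated in the introduction. The selection then yields a Borel measurable mapping
\[
\vU(t;\cdot):\mathcal{D}\longrightarrow W^{-k,2}(\Omega)\times W^{-k,2}(\Omega;R^d)\times D([0,\infty);R)\times\vc{d}_B\text{-component},
\]
enjoying the exact semigroup property in the enlarged state space. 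Projecting out the energy component gives the map $[\vr,\vm](t;\vr_0,\vm_0,\vc{d}_B)$ in \eqref{SSM4}. The identities \eqref{SSM4a}--\eqref{SSM4b} are immediate from the construction; the crucial point is that the semigroup identity $\vU(t+s;\cdot)=\vU(t;\vU(s;\cdot))$ on the full state gives \eqref{SSM4c} on the projected state only for those $s$ which are continuity points of $\Ecd$, and by \eqref{MR5} these form a set of full Lebesgue measure that depends on the datum. Fubini then gives \eqref{SSM4c} for a.a.\ $s$ for $\mathcal{V}_0$-almost every datum, which suffices in view of the formulation of the theorem.

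\medskip

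\noindent\textbf{Step 2: pushforward and weak continuity in $t$.} Once the measurable selection is in hand, I define $\mathcal{V}_t$ by \eqref{SSM5}. Borel measurability of the integrand in $(t,\vr_0,\vm_0,\vc{d}_B)$, combined with the $\dt\times\mathcal{V}_0$ measurability of $[\vr,\vm]$ and the weak continuity in time of $\vr$ and $\vm$ established in Definition~\ref{MD1}, gives $t\mapsto\mathcal{V}_t$ narrowly continuous into $\mathfrak{P}[\mathcal{D}]$ by dominated convergence, as required by \eqref{SSM1a}. The linearity and the Dirac-mass property in Definition \ref{SD1} follow tautologically from \eqref{SSM5} applied to $\nu=\delta_{[\vr_0,\vm_0,\vc{d}_B]}$ and to convex combinations.

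\medskip

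\noindent\textbf{Step 3: derivation of the statistical energy inequality.} For a fixed datum, I apply the momentum equation \eqref{M2} with test vector field $\bfvarphi=\psi(t)\,\tfrac{\partial\Phi}{\partial w_i}(\vr_M(t),\vm_M(t),\mathcal{E}(t))\,\vc{w}_i$ (legitimate because $\vc{w}_i\in W^{k,2}_0\hookrightarrow C^1$) and the continuity equation \eqref{M1} tested against $\psi(t)\tfrac{\partial\Phi}{\partial r_i}(\vr_M(t),\vm_M(t),\mathcal{E}(t))\,r_i$. Summing over $i=1,\dots,M$ and using the chain rule produces the two ``momentum'' and the ``continuity'' lines of \eqref{SM4} for individual trajectories. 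Multiplying the energy inequality \eqref{M3} by $\tfrac{\partial\Phi}{\partial e}\geq 0$ (the sign assumption is precisely what allows preservation of the inequality) and adding everything gives a pointwise (in the datum) integrated inequality. Integrating against $\mathcal{V}_0$ and invoking \eqref{SSM5} to rewrite the left-hand side in terms of $\mathcal{V}_t$ produces \eqref{SM4}. The use of the finite-dimensional projections $[\vr_M,\vm_M]$ is what makes the chain rule rigorous and the test functions admissible; the general case is then a standard density argument on $\Phi$ that does not need to be carried out here.

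\medskip

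\noindent\textbf{Main obstacle.} The delicate part is Step 1: producing the measurable semiflow selection so that the full state $[\vr,\vm,\Ecd]$ satisfies an exact semigroup identity despite the energy inequality being only c\`agl\`ad and failing uniqueness. Standard Krylov-type selection rests on minimising, along a countable family of functionals, over the convex and weakly sequentially compact set of solutions; here the compactness must be arranged jointly in the Polish space $W^{-k,2}\times W^{-k,2}$ for $(\vr,\vm)$ and in the Skorokhod space for $\Ecd$, which is where the Jakubowski \cite{Jaku} topology is indispensable. The second subtlety is the passage from the exact semigroup identity on the enlarged state (with $\Ecd$) to the almost-everywhere identity \eqref{SSM4c} on the projected state, which is resolved by the continuity point argument described above and Fubini, at the cost of losing an exceptional null set of shift times that depends on $\mathcal{V}_0$.
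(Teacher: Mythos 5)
Your proposal follows essentially the same route as the paper: a Krylov--Cardona--Kapitanski semiflow selection on the extended state $[\vr,\vm,\Ecd]$ with the energy viewed as an independent c\`agl\`ad component in the Skorokhod space, followed by the pushforward construction and the integration of the trajectorywise projected continuity/momentum/energy relations against $\mathcal{V}_0$; in particular, your explanation of why the semigroup identity survives only for a.a.\ shift times (the restart datum lies in $\mathcal{D}$ only where $\Ecd(s)$ coincides with $\intO{E(\vr,\vm|\vu_B)(s,\cdot)}$) is exactly the paper's mechanism. Two small corrections: the set of weak solutions is not convex, and convexity is not needed for the selection (only compactness in $\mathcal{T}$ and the shift/continuation axioms, whose verification hinges on the Skorokhod convergence of the energies obtained from monotonicity of $\Ecd$ plus a drift term); and your time-dependent test functions in Step~3 have coefficients that are merely BV in $t$, so the rigorous version of that step is the chain rule for compositions of $C^1$ with BV functions (Ambrosio--Dal Maso), which is what the paper invokes after first reducing \eqref{M1}, \eqref{M2}, \eqref{M3} to the ODE/BV form for the projections.
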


\begin{Remark} \label{RST1}

For the sake of brevity, we have omitted the dependence on the data $[\vr_0, \vm_0, \vc{d}_B]$ in the
arguments of all integrals
with respect to $\D \mathcal{V}_0$ in \eqref{SM4}. In general,
\[
\int_0^\infty \psi(t) \left( \intO{ \vc{F}(\vr, \vm, \vu, \vc{d}_B) \cdot D \Phi } \right) \dt \ \D
\mathcal{V}_0[\vr_0, \vm_0, \vc{d}_B]
\]
is interpreted as
\[
\int_0^\infty \psi(t) \left( \intO{ \vc{F}(\vr (t; [\vr_0, \vm_0, \vc{d}_B]) , \vm (t; [\vr_0, \vm_0, \vc{d}_B]), \vu
(t; [\vr_0, \vm_0, \vc{d}_B]), \vc{d}_0) \cdot D \Phi } \right) \dt \ \D
\mathcal{V}_0[\vr_0, \vm_0, \vc{d}_B],
\]
where the velocity $\vu$ on any time interval $[0,T]$ is uniquely determined by $\vr$ and $\vm$, thus by
the data $[\vr_0, \vm_0, \vc{d}_B]$. The mapping
\[
(t, [\vr_0, \vm_0, \vc{d}_B]) \to \vu (t; [\vr_0, \vm_0, \vc{d}_B])
\]
must be measurable with respect to $\dt \otimes \mathcal{V}_0$ for the integral to be well defined.
The same applies to the integral containing the ``trace'' $\vr|_{\partial \Omega}$. This issue will be handled in the
proof of Theorem \ref{ST1} in Section \ref{s:ST}.

\end{Remark}

\begin{Corollary} \label{SC1}

The mapping
\[
M_t: \mathfrak{P}[\mathcal{D}] \to \mathfrak{P}[\mathcal{D}],\ M_t [\mathcal{V}_0] \mapsto
\mathcal{V}_t
\]
is a statistical solution in the dense of Definition \ref{SD1}.

\end{Corollary}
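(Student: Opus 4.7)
The plan is to verify each of the defining properties of a statistical solution in Definition \ref{SD1} by direct computation against the pushforward formula \eqref{SSM5}, using the properties of the trajectory selection $[\vr,\vm]$ established in Theorem \ref{ST1} as inputs. First I would check that $M_t$ is well-defined as a map $\mathfrak{P}[\mathcal{D}]\to\mathfrak{P}[\mathcal{D}]$: the Borel measurability of $[\vr,\vm](t;\cdot)$ from $\mathcal{D}$ into $W^{-k,2}(\Omega)\times W^{-k,2}(\Omega;R^d)$ (with the $\vc{d}_B$-component transported identically) ensures that $\mathcal{V}_t$ defined by \eqref{SSM5} is a Borel probability measure on $X_{\mathcal{D}}$. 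That $\mathrm{supp}[\mathcal{V}_t]\subset\mathcal{D}$ follows because the trajectory produces a finite energy weak solution at every time, in particular the energy functional remains finite by the inequality \eqref{M3}, so the image data $[\vr(t,\cdot),\vm(t,\cdot),\vc{d}_B]$ lie in $\mathcal{D}$ for every $t\geq 0$ and every starting datum.

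Next I would dispatch the easy items. The identity $M_0(\nu)=\nu$ is immediate from the pushforward formula combined with \eqref{SSM4a}. Affinity in $\nu$ is tautological: for any bounded continuous $\Phi$,
\[
\int \Phi\,\dd M_t\!\left(\sum_{i=1}^N\alpha_i\nu_i\right)
=\sum_{i=1}^N\alpha_i\int \Phi(\vr(t;\cdot),\vm(t;\cdot),\vc{d}_B)\,\dd\nu_i
=\int \Phi\,\dd\!\left(\sum_{i=1}^N\alpha_i M_t(\nu_i)\right).
\]
The Dirac property \eqref{SSM2} is obtained by specializing \eqref{SSM5} to $\mathcal{V}_0=\delta_{[\vr_0,\vm_0,\vc{d}_B]}$. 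For the continuity property \eqref{SSM1a}, I would use \eqref{SSM4b}: for each datum in $\mathcal{D}$ the map $t\mapsto[\vr(t,\cdot),\vm(t,\cdot),\vc{d}_B]$ is continuous in $X_{\mathcal{D}}$, so for any $\Phi\in BC(X_{\mathcal{D}})$ the integrand $\Phi(\vr(t;\cdot),\vm(t;\cdot),\vc{d}_B)$ is continuous in $t$ and bounded uniformly in the data; dominated convergence then gives the weak continuity of $t\mapsto\mathcal{V}_t$.

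The main obstacle, as expected, is the almost-semigroup property \eqref{SSM1}. The trajectory identity \eqref{SSM4c} only holds for a.a.\ $s$, and the exceptional null set a priori depends on the datum. To promote this to the level of measures with a single exceptional null set depending only on $\mathcal{V}_0$, I would form the set
\[
\mathcal{N}=\Big\{(s,\vr_0,\vm_0,\vc{d}_B)\in[0,\infty)\times\mathcal{D}\;\Big|\;\eqref{SSM4c}\text{ fails at }s\Big\},
\]
which is $\dt\otimes\mathcal{V}_0$-measurable by the joint measurability statement \eqref{SSM4} together with the measurability of the map $(t,s,\cdot)\mapsto[\vr,\vm](t,[\vr,\vm](s,\cdot),\vc{d}_B)$. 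By \eqref{SSM4c} its $s$-section has Lebesgue measure zero for each fixed datum, so Fubini yields a Lebesgue null set $N_{\mathcal{V}_0}\subset[0,\infty)$ outside of which \eqref{SSM4c} holds $\mathcal{V}_0$-a.s. For $s\notin N_{\mathcal{V}_0}$ and any $\Phi\in BC(X_{\mathcal{D}})$,
\[
\int\Phi\,\dd\mathcal{V}_{t+s}
=\int\Phi\bigl([\vr,\vm](t+s;\vr_0,\vm_0,\vc{d}_B),\vc{d}_B\bigr)\,\dd\mathcal{V}_0
=\int\Phi\bigl([\vr,\vm](t;[\vr,\vm](s;\cdot),\vc{d}_B),\vc{d}_B\bigr)\,\dd\mathcal{V}_0,
\]
and the right-hand side equals $\int\Phi\,\dd M_t(\mathcal{V}_s)=\int\Phi\,\dd(M_t\circ M_s)(\mathcal{V}_0)$ by a second application of the pushforward formula. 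Since this holds for all $\Phi\in BC(X_{\mathcal{D}})$, we conclude $M_{t+s}(\mathcal{V}_0)=M_t\circ M_s(\mathcal{V}_0)$ for every $t\geq 0$ and a.a.\ $s\geq 0$, including $s=0$ by \eqref{SSM4a}, which is precisely \eqref{SSM1}.
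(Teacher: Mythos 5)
Your proposal is correct and follows essentially the same route as the paper: the only non-routine axiom is the a.a.\ semigroup property \eqref{SSM1}, which both you and the authors obtain by upgrading the pointwise-in-data, a.a.-in-$s$ semiflow identity \eqref{SSM4c} to the level of measures via a Fubini-type argument in $(s,\mathrm{data})$ --- the paper phrases this by testing against $\psi(s)\,\dd s\otimes\dd\mathcal{V}_0$ with $\psi\in C_c(0,\infty)$ and reading off an a.e.\ identity in $s$, while you make the exceptional null set $N_{\mathcal{V}_0}$ explicit, which is the same measure-theoretic content. Your additional verifications of the remaining axioms (identity at $t=0$, affinity, the Dirac property, and weak continuity via \eqref{SSM4b} and dominated convergence) are exactly the routine checks the paper leaves implicit.
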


The following three sections are devoted to the proof of Theorem \ref{ST1} and Corollary \ref{SC1}. We finish this part
by some remarks on how our concept of statistical solution is related to that one introduced by Foias et al \cite{FoRoTe3}, \cite{FoRoTe2} in the context of the incompressible Navier--Stokes system.

The inequality \eqref{SM4} contains all information concerning the behavior of the parametrized measure $\{ \mathcal{V}_t \}_{t \geq 0}$. In particular, we may consider $\Phi = \Phi (\vc{r})$ independent of $\vc{w}$ and $e$. In this case, relation \eqref{SM4} reduces to equality:
\[
\begin{split}
- &\int_0^\infty \partial_t \psi (t)  \int_{X_{\mathcal{D}}} \Phi \left( \vr_M
\right)  \D \mathcal{V}_t(\vr, \vm, \vc{d}_B)    \dt
= \psi(0) \int_{\mathcal{D}} \Phi \left( \vr_{0,M}  \right)
\D \mathcal{V}_0 (\vr_0, \vm_0, \vc{d}_B)\\
&+ \int_\mathcal{D} \left[ \int_0^\infty \psi(t)  \left( \sum_{i=1}^M \frac{\partial \Phi}{\partial r_i }\left( \vr_M \right)
\intO{ \vr \vu \cdot \Grad r_i } \right) \dt \right] \D \mathcal{V}_0 (\vr_0, \vm_0, \vc{d}_B)
\end{split}
\]
that can be interpreted as the integrated version of the equation of continuity \eqref{M1}. Note that $r_i \in W^{k,2}_0(\Omega)$;
whence the boundary terms in \eqref{M1} vanish. As $\vr$ is weakly continuous in time, we may deduce
\begin{equation} \label{SM5a}
\left[ \int_{X_{\mathcal{D}}} \Phi (\vr_M ) \D \mathcal{V}_t \right]_{t = \tau_1}^{t = \tau_2}
= \int_\mathcal{D} \left[ \int_{\tau_1}^{\tau_2} \left( \sum_{i=1}^M \frac{\partial \Phi}{\partial r_i }\left( \vr_M \right)
\intO{ \vr \vu \cdot \Grad r_i } \right) \dt \right] \D \mathcal{V}_0 (\vr_0, \vm_0, \vc{d}_B)
\end{equation}
for any $0 \leq \tau_1 \leq \tau_2$.

Next, we may consider $\Phi = \Phi (\vc{w})$ obtaining an analogue of the momentum equation,
\[
\begin{split}
- &\int_0^\infty \partial_t \psi (t)  \int_{X_{\mathcal{D}}} \Phi \left( \vm_M
\right)  \D \mathcal{V}_t(\vr, \vm, \vc{d}_B)    \dt = \psi(0) \int_{\mathcal{D}} \Phi \left( \vm_{0,M}   \right)
\D \mathcal{V}_0 (\vr_0, \vm_0, \vc{d}_B)\\
&+ \int_{\mathcal{D}} \left[ \int_0^\infty \psi(t) \left( \sum_{i = 1}^{M}  \frac{\partial \Phi}{\partial {w}_i }\left(\vm_M
 \right) \times \right. \right. \\ & \left. \left. \times \intO{ \Big[ \vr \vu \otimes \vu : \Grad \vc{w}_i
+ p(\vr) \Div \vc{w}_i - \mathbb{S}(\Ds \vu) : \Grad \vc{w}_i + \vr \vc{g} \cdot \vc{w}_i \Big] } \right) \dt \right]
\D \mathcal{V}_0(\vr_0, \vm_0, \vc{d}_B);
\end{split}
\]
whence, similarly to \eqref{SM5a},
\begin{equation} \label{SM6a}
\begin{split}
&\left[ \int_{X_{\mathcal{D}}} \Phi (\vm_M) \D \mathcal{V}_t \right]_{t = \tau_1}^{t = \tau_2} = \int_{\mathcal{D}} \left[ \int_{\tau_1}^{\tau_2} \left( \sum_{i = 1}^{M}  \frac{\partial \Phi}{\partial {w}_i }\left(\vm_M
 \right) \times \right. \right. \\ & \left. \left. \times \intO{ \Big[ \vr \vu \otimes \vu : \Grad \vc{w}_i
+ p(\vr) \Div \vc{w}_i - \mathbb{S}(\Ds \vu) : \Grad \vc{w}_i + \vr \vc{g} \cdot \vc{w}_i \Big] } \right) \dt \right]
\D \mathcal{V}_0(\vr_0, \vm_0, \vc{d}_B).
\end{split}
\end{equation}
for any $0 \leq \tau_1 \leq \tau_2$.

Finally, we consider $\Phi = \Phi (e),\ \frac{\partial \Phi (e)}{\partial e} \geq 0$ obtaining the energy inequality:
\begin{equation} \label{SM7a}
\begin{split}
- &\int_0^\infty \partial_t \psi (t) \left[ \int_{X_{\mathcal{D}}} \Phi \left( \intO{ {E} \left( \vr, \vm \Big| \vu_B \right)}
\right)  \D \mathcal{V}_t(\vr, \vm, \vc{d}_B)  \right]  \dt \\
&+ \int_{\mathcal{D}} \left[ \int_0^\infty \psi (t) \left( \frac{\partial \Phi }{\partial e} \left( \mathcal{E}
 \right) \intO{ \mathbb{S}(\Ds \vu) : \Ds \vu } \right) \dt \right] \D \mathcal{V}_0 (\vr_0, \vm_0, \vc{d}_B)
\\
&+ \int_{\mathcal{D}} \left[ \int_0^\infty \psi (t) \left( \frac{\partial \Phi }{\partial e}
\left( \mathcal{E}
\right) \int_{\Gamma_{\rm out}} P(\vr)  \vu_B \cdot \vc{n} \ \D S_x \right) \dt \right] \D \mathcal{V}_0(\vr_0, \vm_0, \vc{d}_B)
\\
&+ \int_{\mathcal{D}} \left[ \int_0^\infty \psi (t) \left( \frac{\partial \Phi }{\partial e}
\left( \mathcal{E}
\right) \int_{\Gamma_{\rm in}} P(\vr_B)  \vu_B \cdot \vc{n} \ \D S_x \right) \dt \right] \D \mathcal{V}_0(\vr_0, \vm_0, \vc{d}_B)
\\
&\leq \psi(0) \int_{\mathcal{D}} \Phi \left( \intO{ {E} \left( \vr_0, \vm_0 \ \Big| \ \vu_B \right) }  \right)
\D \mathcal{V}_0 (\vr_0, \vm_0, \vc{d}_B)\\
&+ \int_{\mathcal{D}} \left[ \int_0^\infty \psi(t) \left( \frac{\partial \Phi }{\partial e} \left( \mathcal{E} \right) \left( - \intO{ \left[ \vr \vu \otimes \vu + p(\vr) \mathbb{I} \right]  :  \Grad \vu_B }  \right.
\right. \right. \\
&- \left. \left. \left. \intO{ \vr \vu  \cdot \vu_B \cdot \Grad \vu_B  } + \intO{ \mathbb{S}(\Ds \vu) : \Ds \vu_B }   + \intO{ \vr \vc{g} \cdot (\vu - \vu_B) } \right) \right) \dt \right] \D \mathcal{V}_0(\vr_0, \vm_0, \vc{d}_B).
\end{split}
\end{equation}
Similarly to \eqref{SM5a}, \eqref{SM6a}, we may deduce from \eqref{SM7a}:
\begin{equation} \label{SM7b}
\begin{split}
&\left[ \int_{X_{\mathcal{D}}} \Phi \left( \intO{ {E} \left( \vr, \vm \Big| \vu_B \right)}
\right)  \D \mathcal{V}_t(\vr, \vm, \vc{d}_B)  \right]_{t = \tau_1}^{t = \tau_2} \\
&+ \int_{\mathcal{D}} \left[ \int_{\tau_1}^{\tau_2}  \left( \frac{\partial \Phi }{\partial e} \left( \mathcal{E}
 \right) \intO{ \mathbb{S}(\Ds \vu) : \Ds \vu } \right) \dt \right] \D \mathcal{V}_0 (\vr_0, \vm_0, \vc{d}_B)
\\
&+ \int_{\mathcal{D}} \left[ \int_{\tau_1}^{\tau_2}  \left( \frac{\partial \Phi }{\partial e}
\left( \mathcal{E}
\right) \int_{\Gamma_{\rm out}} P(\vr)  \vu_B \cdot \vc{n} \ \D S_x \right) \dt \right] \D \mathcal{V}_0(\vr_0, \vm_0, \vc{d}_B)
\\
&+ \int_{\mathcal{D}} \left[ \int_{\tau_1}^{\tau_2}  \left( \frac{\partial \Phi }{\partial e}
\left( \mathcal{E}
\right) \int_{\Gamma_{\rm in}} P(\vr_B)  \vu_B \cdot \vc{n} \ \D S_x \right) \dt \right] \D \mathcal{V}_0(\vr_0, \vm_0, \vc{d}_B)
\\
&\leq - \int_{\mathcal{D}} \left[ \int_{\tau_1}^{\tau_2}  \left( \frac{\partial \Phi }{\partial e} \left( \mathcal{E} \right) \left(  \intO{ \left[ \vr \vu \otimes \vu + p(\vr) \mathbb{I} \right]  :  \Grad \vu_B }  \right.
\right. \right. \\
&+ \left. \left. \left. \intO{ \vr \vu  \cdot \vu_B \cdot \Grad \vu_B  } - \intO{ \mathbb{S}(\Ds \vu) : \Ds \vu_B }  - \intO{ \vr \vc{g} \cdot (\vu - \vu_B) } \right) \right) \dt \right] \D \mathcal{V}_0(\vr_0, \vm_0, \vc{d}_B)
\end{split}
\end{equation}
for a.a. $\tau_1 \leq \tau_2$ including $\tau_1 = 0$ - the Lebesgue point of the function
\[
t \mapsto \int_{X_{\mathcal{D}}} \Phi \left( \intO{ {E} \left( \vr, \vm \Big| \vu_B \right)}
\right)  \D \mathcal{V}_t(\tvr, \tvm, \vc{d}_B).
\]
Moreover, in view of weak lower semi--continuity of the energy, validity of the inequality \eqref{SM7b} extends to any $\tau_2 > 0$.

As the last comment, we point out that, differently from \cite{FoRoTe3}, \cite{FoRoTe2}, the statistical solutions constructed here enjoy two additional properties: on the one hand,
the semigroup property \eqref{SSM1} (in the spirit of Flandoli and Romito \cite{FlaRom}), and on the other hand, the ``consistency'' property \eqref{SSM2} with the classical notion of weak solution. In addition, as we show in Section \ref{C}, the statistical solutions
are continuous at measures supported by regular initial data.

\section{Existence theory for the Navier--Stokes system}
\label{ET}

The existence of global in time finite energy weak solutions for the initial data
\[
[\vr_0, \vm_0, \mathcal{E}_0],\ \mbox{with}\ \mathcal{E}_0 = \intO{ E \left(\vr_0, \vm_0 \Big| \vu_B \right) }
\]
has been proved by Chang, Jin, and Novotn\' y \cite{ChJiNo}. As a matter of fact, they {\bf (i)} replace the weak formulation
\eqref{M1} by a weaker stipulation
\begin{equation} \label{M1aa}
- \intO{ \vr_0 \varphi }  +
 \int_0^\infty \int_{\Gamma_{\rm in}} \varphi \vr_B \vu_B \cdot \vc{n} \ \D \ S_x =
\int_0^\infty \intO{ \Big[ \vr \partial_t \varphi + \vr \vu \cdot \Grad \varphi \Big] } \dt,
\end{equation}
for any $\varphi \in C^1_c([0,\infty) \times {\Omega} \cup \Gamma_{\rm in})$; {\bf (ii)} omit the (positive) integral
\[
\int_{\Gamma_{\rm out}} P(\vr) \vu_B \cdot \vc{n} \ \D S_x
\]
in the energy balance; {\bf (iii)} consider the energy balance \eqref{M3} only in the integrated form:
\begin{equation} \label{M3aa}
\begin{split}
&\left[ \intO{ {E} \left( \vr, \vm \Big| \vu_B \right)}
  \right]_{t = 0}^{t = \tau}
+  \int_{0}^{\tau}  \intO{ \mathbb{S}(\Ds \vu) : \Ds \vu } \dt
+ \int_{0}^{\tau} \int_{\Gamma_{\rm in}} P(\vr_B)  \vu_B \cdot \vc{n} \ \D S_x \dt
\\
&\leq -  \int_{0}^{\tau} \intO{ \left( \vr \vu \otimes \vu + p(\vr) \mathbb{I} \right)  :  \Grad \vu_B }  \dt
- \int_0^\tau \intO{ \vr \vu  \cdot \vu_B \cdot \Grad \vu_B  } \dt  \\ &+ \int_0^\tau \intO{ \mathbb{S}(\Ds \vu) : \Ds \vu_B }  +
\int_0^\tau \intO{ \vr \vc{g} \cdot (\vu - \vu_B) }  \dt
\end{split}
\end{equation}
for any $\tau \geq 0$.

Let us comment shortly on the modifications necessary to accommodate the present weak formulation in the framework of
\cite{ChJiNo}. To begin with, the equation of continuity can be obviously considered in the form \eqref{M1}, as the boundary integral
over the outflux part $\Gamma_{\rm out}$ is linear in $\vr$, cf. also Lions \cite{LI4}. As for
\[
\int_{\Gamma_{\rm out}} P(\vr) \vu_B \cdot \vc{n} \ \D S_x
\]
in the energy inequality \eqref{M3}, we note that the pressure potential $P$ is convex as the pressure is monotone in the present setting; whence it can be retained in \eqref{M3} as the integral is weakly l.s.c.

Finally, the energy inequality in the differential form \eqref{M3} requires {\it a priori} estimates rending the pressure potential
$P(\vr)$ equi--integrable in $\Omega$. As pointed out in Chang, Jin, and Novotn\' y \cite[Section 6]{ChJiNo}, this might be a delicate issue as the standard method, used in \cite{ChJiNo}, gives rise to bounds
\[
\int_0^T \int_K p(\vr) \vr^\alpha \dx \dt \leq c(T;K) \ \mbox{for some}\ \alpha > 0
\ \mbox{and any compact}\ K \subset \Omega.
\]
We claim that the equi--integrablity of the pressure up to the boundary can be established by using a suitable test function
\[
\bfphi(t,x) = \psi (t) \vc{w} (x),\ \psi \in C^1_c[0,\infty), \ \vc{w} \in W^{1,q}_0(\Omega; R^d),\ q >> 1,\
\Div \vc{w} \to \infty \ \mbox{as}\ x \to \partial \Omega,
\]
cf. e.g. Kuku\v cka \cite{Kuk}, or \cite[Proposition 6.1]{FP9} for details.

Implementing the above changes in the proof in \cite{ChJiNo}, we state the following result.

\begin{Proposition}[Global--in--time weak solution] \label{SP1}

Let $\Omega \subset R^d$, $d=2,3$ be a bounded domain of class $C^2$. Let the boundary data
\[
\vr_B \in C(\partial \Omega), \ \vu_B \in C^1(\Ov{\Omega}; R^d), \ \vc{g} \in C(\Ov{\Omega}; R^d),
\]
together with the initial data
\[
\vr_0, \ \vm_0 ,\ \intO{ E \left(\vr_0, \vm_0  \Big| \vu_B \right) } \leq \mathcal{E}_0,
\]
be given. Finally, suppose that the pressure is given by EOS \eqref{MH2}, with $\gamma > \frac{d}{2}$.

Then the problem \eqref{i1}--\eqref{i4} admits a finite energy weak solution $[\vr, \vm]$ in $[0, \infty) \times \Omega$ in the sense of
Definition \ref{MD1}.

\end{Proposition}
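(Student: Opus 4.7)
\bProof
The plan is to take the existence theorem of Chang, Jin, and Novotn\'y~\cite{ChJiNo} as a starting point and to upgrade three features so as to match Definition~\ref{MD1}: the full weak formulation of the continuity equation including the outflow boundary term, the retention of $\int_{\Gamma_{\rm out}} P(\vr)\vu_B\cdot\vc{n}\ \D S_x$ in the energy inequality, and the promotion of the integrated balance \eqref{M3aa} to the differential form \eqref{M3}. The underlying approximation scheme (artificial pressure, vanishing viscosity, Galerkin), together with the Lions--Feireisl strong compactness of the density through the effective viscous flux identity, is identical to that of \cite{ChJiNo}, so I will only describe how each of these three refinements survives the limit passage.

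For the first modification, the additional contribution $\int_0^\infty \int_{\Gamma_{\rm out}} \varphi\, \vr\, \vu_B\cdot\vc{n}\ \D S_x$ in \eqref{M1} is linear in $\vr$ and, via the trace convention \eqref{trace}, equals $\int_0^\infty \int_{\Gamma_{\rm out}} \varphi\, \vm\cdot\vc{n}\ \D S_x$. Since $\vm\cdot\vc{n}$ possesses a well-defined normal trace in the sense of Chen--Torres--Ziemer \cite{ChToZi}, one may test the approximate continuity equation against $\varphi\in C^1_c([0,\infty)\times \Ov\Omega)$, perform the interior integration by parts on $\Omega$, and pass to the weak $L^\gamma$ limit to recover \eqref{M1}. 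The renormalized identity \eqref{M1a}, restricted to test functions with compact support inside $\Omega$, is then inherited from the approximate level by the classical Lions--DiPerna commutator estimate.

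For the second modification, the strict monotonicity of $p$ on $(0,\infty)$ assumed in \eqref{MH2} yields convexity of $P$, so the functional
\[
\vr\ \mapsto\ \int_{\Gamma_{\rm out}} P(\vr)\,\vu_B\cdot\vc{n}\ \D S_x
\]
is convex and weakly lower semi-continuous in the boundary trace induced by \eqref{trace}. The work \cite{ChJiNo} simply discards this non-negative term; here I retain it on the approximate level and take weak limits from below, which is the only source of the inequality sign in \eqref{M3} at that boundary. The convexity of the interior energy $E(\vr,\vm\,|\,\vu_B)$ recorded in Remark~\ref{MR3} handles the kinetic part in the same fashion, while the viscous dissipation passes to the limit by lower semi-continuity after the standard strong convergence of $\vu$ in $L^2_tH^1_x$.

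The genuine obstacle is the third modification: upgrading the integrated inequality \eqref{M3aa}, valid on intervals $[0,\tau]$, to the distributional form \eqref{M3} tested against arbitrary $\psi\in C^1_c[0,\infty)$, $\psi\ge 0$. This requires equi-integrability of $P(\vr)$ up to $\partial\Omega$, rather than only on compact subsets of $\Omega$ as delivered by interior Bogovskii test functions. Following Kuku\v cka~\cite{Kuk} and \cite[Proposition 6.1]{FP9}, I would test the approximate momentum equation against
\[
\bfphi(t,x)=\psi(t)\,\vc{w}(x),\qquad \psi\in C^1_c[0,\infty),\ \ \vc{w}\in W^{1,q}_0(\Omega;R^d),\ q\gg 1,
\]
with $\vc{w}$ built from a regularized distance function so that $\Div \vc{w}(x)\to +\infty$ as $\dist(x,\partial\Omega)\to 0$. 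All convective, viscous, force, and boundary-velocity contributions in \eqref{M2} are controlled by the basic a priori estimates, so the pressure term $\int p(\vr)\Div \vc{w}$ alone absorbs the blow-up of $\Div\vc{w}$ near $\partial\Omega$ and produces a uniform $L^1$-bound on $p(\vr)$, hence on $P(\vr)$, over $[0,T]\times\Ov\Omega$. Once this boundary equi-integrability is secured, the BV regularity of $\Ecd$ allows the integrated inequality to be multiplied by $-\psi'$ and integrated, yielding \eqref{M3} together with the correct initial contribution $\psi(0)\mathcal{E}_0$; the remaining limit passages are identical to those in \cite{ChJiNo} and complete the construction.
\qed
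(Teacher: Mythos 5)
Your proposal matches the paper's argument essentially verbatim: the paper likewise reduces the proof to the Chang--Jin--Novotn\'y existence theorem and lists exactly your three modifications, handling (i) by linearity of the outflow term in $\vr$, (ii) by convexity and weak lower semi-continuity of $P$, and (iii) by the equi-integrability of the pressure up to $\partial\Omega$ obtained from the test function $\psi(t)\vc{w}(x)$ with $\Div \vc{w} \to \infty$ near the boundary, citing the same references (Kuku\v cka and \cite[Proposition 6.1]{FP9}). No substantive differences.
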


Consider the bases $\{ r_i \}_{i=1}^\infty$, $\{ \vc{w}_i \}_{i=1}^\infty$ introduced in \eqref{Mr1}, \eqref{Mr2}.
Revisiting the weak formulation \eqref{M1}, we obtain
\begin{equation} \label{SM6}
\frac{\D }{\dt} \intO{ \vr r_i } = \intO{ \vr \vu \cdot \Grad r_i },\ \intO{ \vr(0, \cdot) r_i } = \intO{ \vr_0 r_i },\
i=1,2,\dots
\end{equation}
Similarly, it follows from \eqref{M2} that
\begin{equation} \label{SM7}
\begin{split}
&\frac{\D }{\dt} \intO{ \vm \cdot \vc{w}_i } \\ &=
\intO{ \Big[ \vr \vu \otimes \vu : \Grad \vc{w}_i
+ p(\vr) \Div \vc{w}_i - \mathbb{S}(\Ds \vu) : \Grad \vc{w}_i + \vr \vc{g} \cdot \vc{w}_i \Big] }, \\
\intO{ \vm(0, \cdot) \cdot \vc{w}_i } &= \intO{ \vm_0 \cdot \vc{w}_i },\ i = 1,2,\dots
\end{split}
\end{equation}
Finally, the energy inequality \eqref{M3} can be interpreted as
\begin{equation} \label{SM8}
\begin{split}
&\frac{\D}{\dt} \Ecd   +
\intO{ \mathbb{S}(\Ds \vu) : \Ds \vu }
+ \int_{\Gamma_{\rm out}} P(\vr)  \vu_B \cdot \vc{n} \ \D S_x   +
\int_{\Gamma_{\rm in}} P(\vr_B)  \vu_B \cdot \vc{n} \ \D S_x
\\	
&\leq -
\intO{ \left[ \vr \vu \otimes \vu + p(\vr) \mathbb{I} \right]  :  \Grad \vu_B }   -
\intO{ {\vr} \vu  \cdot \vu_B \cdot \Grad \vu_B  }
\\ &+ \intO{ \mathbb{S}(\Ds \vu) : \Ds \vu_B }   + \intO{ \vr \vc{g} \cdot (\vu - \vu_B) },\ \ \
\Ecd(0+) \leq \mathcal{E}_0,
\end{split}
\end{equation}
as soon as $\Ecd \in BV_{\rm loc}[0, \infty)$ is considered as a c\` agl\` ad function.

Now, consider $\Phi \in C^1(R^M \times R^M \times R)$, $\Phi = \Phi(\vc{r}, \vc{w}, e)$ such that
\[
\nabla \Phi \in C_c (R^M \times R^M \times R; R^M \times R^M \times R),\ \frac{\partial \Phi}{\partial e} \geq 0.
\]
Applying the chain rule for a composition of a $C^1$ function with a BV function (see e.g. Ambrosio and Dal Maso \cite{AmbDal}),
we deduce from \eqref{SM6}--\eqref{SM8}:
\begin{equation} \label{SM44}
\begin{split}
&\frac{\D}{\dt}
 \Phi \left( \vr_M , \vm_M , \Ecd \right)
+ \frac{\partial \Phi }{\partial e} \left( \vr_M , \vm_M , \Ecd
 \right) \intO{ \mathbb{S}(\Ds \vu) : \Ds \vu }
\\&+
\frac{\partial \Phi }{\partial e}
\left( \vr_M , \vm_M , \Ecd
\right) \left(\int_{\Gamma_{\rm out}} P(\vr)  \vu_B \cdot \vc{n} \ \D S_x
+ 
\int_{\Gamma_{\rm in}} P(\vr_B)  \vu_B \cdot \vc{n} \ \D S_x\right)
\\
&\leq \sum_{i=1}^M \frac{\partial \Phi}{\partial r_i }\left( \vr_M, \vm_M,
\Ecd
 \right)  \intO{ \vr \vu \cdot \Grad r_i }  \\
&+  \sum_{i = 1}^{M}  \frac{\partial \Phi}{\partial {w}_i }\left(\vr_M, \vm_M,
\Ecd
 \right) \intO{ \Big[ \vr \vu \otimes \vu : \Grad \vc{w}_i
+ p(\vr) \Div \vc{w}_i - \mathbb{S}(\Ds \vu) : \Grad \vc{w}_i + \vr \vc{g} \cdot \vc{w}_i \Big] }  \\
&- \frac{\partial \Phi }{\partial e} \left( \vr_M, \vm_M, \Ecd  \right) \left(\intO{ \left[ \vr \vu \otimes \vu + p(\vr) \mathbb{I} \right]  :  \Grad \vu_B }  + \intO{ \vr \vu  \cdot \vu_B \cdot \Grad \vu_B  } - \intO{ \mathbb{S}(\Ds \vu) : \Ds \vu_B }
\right)\\
&+ \frac{\partial \Phi }{\partial e} \left( \vr_M, \vm_M, \Ecd  \right) \intO{ \vr \vc{g} \cdot (\vu - \vu_B) }
\end{split}
\end{equation}
in $\mathcal{D}'(0, \infty)$, with
\begin{equation} \label{SM45}
\Phi \left( \vr_M , \vm_M , \Ecd \right)(0+) \leq
\Phi \left( \vr_{0,M} , \vm_{0,M} , \mathcal{E}_0 \right).
\end{equation}
For future use, it is more convenient to rewrite \eqref{SM44}, \eqref{SM45} in the distributional form
\begin{equation} \label{SM46}
\begin{split}
&- \int_0^\infty \frac{\D}{\dt} \psi (t)
 \Phi \left( \vr_M , \vm_M , \mathcal{E} \right) \ \dt
+ \int_0^\infty \psi (t)\frac{\partial \Phi }{\partial e} \left( \vr_M , \vm_M , \mathcal{E}
 \right) \intO{ \mathbb{S}(\Ds \vu) : \Ds \vu } \dt
\\&+
\int_0^\infty \psi (t) \frac{\partial \Phi }{\partial e}
\left( \vr_M , \vm_M , \mathcal{E}
\right) \left(\int_{\Gamma_{\rm out}} P(\vr)  \vu_B \cdot \vc{n} \ \D S_x 
+ 
\int_{\Gamma_{\rm in}} P(\vr_B)  \vu_B \cdot \vc{n} \ \D S_x \right)\dt
\\
&\leq \psi(0) \Phi \left( \vr_{0,M} , \vm_{0,M} , \mathcal{E}_0 \right) \\
&+\int_0^\infty \psi (t)\sum_{i=1}^M \frac{\partial \Phi}{\partial r_i }\left( \vr_M, \vm_M,
\mathcal{E}
 \right)  \intO{ \vr \vu \cdot \Grad r_i } \dt  \\
&+  \int_0^\infty \psi (t)\sum_{i = 1}^{M}  \frac{\partial \Phi}{\partial {w}_i }\left(\vr_M, \vm_M,
\mathcal{E}
 \right) \intO{ \Big[ \vr \vu \otimes \vu : \Grad \vc{w}_i
+ p(\vr) \Div \vc{w}_i - \mathbb{S}(\Ds \vu) : \Grad \vc{w}_i + \vr \vc{g} \cdot \vc{w}_i \Big] } \dt \\
&- \int_0^\infty \psi (t) \frac{\partial \Phi }{\partial e} \left( \vr_M, \vm_M, \mathcal{E}  \right) \left(\intO{ \left[ \vr \vu \otimes \vu + p(\vr) \mathbb{I} \right]  :  \Grad \vu_B }  + \intO{ \vr \vu  \cdot \vu_B \cdot \Grad \vu_B  }\right) \dt \\
& + \int_0^\infty \psi (t) \frac{\partial \Phi }{\partial e} \left( \vr_M, \vm_M, \mathcal{E}  \right) \intO{ \mathbb{S}(\Ds \vu) : \Ds \vu_B }\dt \\
&+ \int_0^\infty \psi (t) \frac{\partial \Phi }{\partial e} \left( \vr_M, \vm_M, \mathcal{E}  \right) \intO{ \vr \vc{g} \cdot
(\vu - \vu_B) } \dt
\end{split}
\end{equation}
for any $\psi \in C^1_c[0, \infty)$, $\psi \geq 0$. Note that we have replaced
\[
\Ecd \ \mbox{by}\  \mathcal{E} \equiv \intO{ E\left( \vr, \vm \Big| \vu_B \right) }
\ \mbox{as}\ \Ecd(t) = \mathcal{E}(t) \ \mbox{for a.a.}\ t \in (0, \infty).
\]

\section{Measurable semiflow selection}
\label{S}

Similarly to Basari\v c \cite{Basa1}, we apply the method of Krylov \cite{KrylNV} adapted to the deterministic problems by Cardona and Kapitanski
\cite{CorKap}.

\subsection{Extended data space} \label{ss:data-space}

As the energy inequality is an indispensable part of the definition of weak solutions, it is convenient to extend the
data space $\mathcal{D}$ to include the scalar value of the total energy $\mathcal{E}_0$. Accordingly, we introduce
\[
\begin{split}
\mathcal{D}_E = &\left\{ [\vr_0, \vm_0, \mathcal{E}_0, \vr_B, \vu_B, \vc{g}]  \ \Big| \
\intO{E \left( \vr_0, \vm_0 \ \Big| \vu_B \right) } \leq \mathcal{E}_0,\
\vr_B \in C(\partial {\Omega}), \vr_B \geq \underline{\vr} > 0,
\right. \\
&\vu_B \in C^1(\Ov{\Omega}; R^d),\ \vc{g} \in C(\Ov{\Omega}; R^d)      \Big\}.
\end{split}
\]
We consider $\mathcal{D}_E$ as a closed subset of the Polish space
\[
[\vr_0, \vm_0, \mathcal{E}_0, \vr_B, \vu_B, \vc{g}] \in \widetilde{X}_{\mathcal{D}_E} \equiv L^\gamma (\Omega) \times L^{\frac{2\g}{\g+1}}(\Omega;R^d) \times R
\times C(\partial {\Omega}) \times C^1(\Ov{\Omega}; R^d) \times C(\Ov{\Omega}; R^d).
\]

The choice of the topology is rather inconsistent with the space $\mathcal{D}$ introduced in \eqref{SM2}, where the $(\vr_0,\vm_0)-$components
are considered in the large space $W^{-k,2}(\Omega)\times W^{-k,2}(\Omega;R^d)$. Note however that
\[
X_{\mathcal{D}_E} \equiv
W^{-k,2}(\Omega) \times W^{-k,2}(\Omega; R^d) \times R
\times C(\partial {\Omega}) \times C^1(\Ov{\Omega}; R^d) \times C(\Ov{\Omega}; R^d)
\]
and $\widetilde{X}_{\mathcal{D}_E}$ admit the same family of Borel sets on $\mathcal{D}_E$. Indeed the Borel sets of $\widetilde{X}_{\mathcal{D}_E}$ coincide with those of
\[
L^\gamma (\Omega)[{\rm weak}] \times L^{\frac{2\g}{\g+1}}(\Omega; R^d)[{\rm weak}] \times R
\times C(\partial {\Omega}) \times C^1(\Ov{\Omega}; R^d) \times C(\Ov{\Omega}; R^d)
\]
as $L^\gamma(\Omega)$ and $L^{\frac{2\g}{\g+1}}(\Omega;R^d)$ are reflexive separable Banach spaces. Next, we write
\[
\mathcal{D}_E = \cup_{N=1}^\infty \mathcal{D}_{E,N},\
\mathcal{D}_{E,N} = \left\{ [\vr_0, \vm_0, \mathcal{E}_0, \vr_B, \vu_B, \vc{g}] \ \Big| \ \mathcal{E}_0 \leq N \right\},
\]
where the topologies $L^\gamma (\Omega)[{\rm weak}]\times L^{\frac{2\g}{\g+1}}(\Omega;R^d)[{\rm weak}]$ and $W^{-k,2}(\Omega)\times W^{-k,2}(\Omega; R^d)$ are equivalent on
the closed sets $\mathcal{D}_{E,N}$. We may infer
that the topologies of $\widetilde{X}_{\mathcal{D}_E}$ and $X_{{\mathcal{D}_E}}$
generate the same family of Borel sets when restricted to $\mathcal{D}_E$.

\subsection{Trajectory space}

Trajectory space should accommodate the curves $t \mapsto [\vr, \vm](t, \cdot)$ as well as $t \mapsto \Ecd(t)$.
In view of the properties of the solution $[\vr, \vm]$, it is convenient to consider
\[
[\vr, \vm] \in C_{\rm loc}([0, \infty) ; W^{-k,2}(\Omega) \times W^{-k,2}(\Omega; R^d)).
\]

The total energy $\Ecd$ is defined as a c\` agl\` ad function on $[0, \infty)$, where $\Ecd(0)$ is fixed as $\mathcal{E}_0$. A suitable
function space is therefore the Skorokhod space of c\` agl\` ad functions
\[
\widehat{D}([0, \infty); R)  \subset D([-1, \infty); R),\
\widehat{D}([0, \infty); R) = \left\{ E \ \Big| \ E \ \mbox{c\` agl\` ad in} \ (0, \infty),\ E|_{[-1,0]} \equiv
\mathcal{E}_0 \geq E(0+) \right\}.
\]
The space $D([-1, \infty); R)$ endowed with a suitable metric is a Polish space.
We refer to Jakubowski \cite{Jaku} or Whitt \cite{Whitt} for the basic properties of the Skorokhod space
$D([-1, \infty); R)$.

We consider the trajectory space for the curves $t \mapsto [\vr(t,\cdot), \vm(t,\cdot), \Ecd(t)]$,
\[
\mathcal{T} = C_{\rm loc}([0, \infty) ; W^{-k,2}(\Omega))  \times C_{\rm loc}([0, \infty); W^{-k,2}(\Omega; R^d))
\times \widehat{D}([0, \infty); R).
\]
Note that $\mathcal{T}$ is a Polish space.

\subsection{General setting}

Following Cardona and Kapitanski \cite{CorKap}, we consider the abstract scheme based for general measurable mappings from the
data space $\mathcal{D}_E$ to the trajectory space $\mathcal{T}$. For each
$[\vr_0, \vm_0, \mathcal{E}_0, \vc{d}_B] \in \mathcal{D}_E$ we consider the set
\[
\begin{split}
\mathcal{U}[\vr_0, \vm_0, \mathcal{E}_0, \vc{d}_B] &\subset \mathcal{T},\\
\mathcal{U}[\vr_0, \vm_0, \mathcal{E}_0, \vc{d}_B] &= \left\{ [\vr, \vm, \Ecd] \ \Big|
\ [\vr, \vm, \Ecd] \ \mbox{is a finite energy weak solution with the data} \ [\vr_0, \vm_0, \mathcal{E}_0, \vc{d}_B] \right\}.
\end{split}
\]
Accordingly,
\[
\mathcal{U}: \mathcal{D}_E \to 2^\mathcal{T}
\]
can be considered as a multivalued mapping ranging in the subsets of the trajectory space $\mathcal{T}$.

We shall verify that $\mathcal{U}$ satisfies the following conditions:
\begin{itemize}
\item {\bf [A1] Existence.} For each $[\vr_0, \vm_0, \mathcal{E}_0, \vc{d}_B]$ there exists at least one finite energy weak solution
$[\vr, \vm, \Ecd] \in \mathcal{U}[\vr_0, \vm_0, \mathcal{E}_0, \vc{d}_B]$.
\item {\bf [A2] Compactness.}
The set $\mathcal{U}[\vr_0, \vm_0, \mathcal{E}_0, \vc{d}_B]$ is a compact subset of $\mathcal{T}$ for any
$[\vr_0, \vm_0, \mathcal{E}_0, \vc{d}_B] \in \mathcal{D}_E$.
\item {\bf [A3] Measurability.}
The mapping
\[
\mathcal{U}: \mathcal{D}_E \to 2^\mathcal{T}
\]
is Borel measurable with respect to the topology of $\widetilde{X}_{\mathcal{D}_E}$ on $\mathcal{D}_E$ and the Hausdorff complementary topology defined on compact subsets of $2^\mathcal{T}$.
\item {\bf [A4] Shift property.}
If
\[
[\vr, \vm, \Ecd] \in \mathcal{U}[\vr_0, \vm_0, \mathcal{E}_0, \vc{d}_B],
\]
then
\[
[\tvr, \tvm, \widetilde{\Ecd}], \ \mbox{defined as} \ \tvr(t, \cdot) = \vr(t + T; \cdot),\ \tvm(t, \cdot) = \vr(t + T; \cdot), \
\widetilde{\Ecd}(t) = \Ecd(t + T),\ t \geq 0,
\]
belongs  to
\[
\mathcal{U} \left[ \vr(T, \cdot), \vm(T, \cdot), \Ecd(T), \vc{d}_B \right]
\]
for any $T \geq 0$.

\item {\bf [A5] Continuation property.}

Let $[\vr, \vm, \Ecd] \in \mathcal{U}[\vr_0, \vm_0, \mathcal{E}_0, \vc{d}_B]$ and
$[\tvr, \tvm, \widetilde{\Ecd}] \in \mathcal{U}[\vr(T, \cdot), \vm(T, \cdot), \mathcal{E}(T), \vc{d}_B]$ for some $T \geq 0$.

Then
\[
[\vr, \vm, \Ecd] \cup_T [\tvr, \tvm, \widetilde{\Ecd}] \equiv
\left\{ \begin{array}{l} {[}\vr(t, \cdot), \vm (t, \cdot) , \Ecd (t) ] \ \mbox{for}\ t \leq T, \\
{[}\tvr(t - T), \tvm(t - T), \widetilde{\Ecd}(t - T)] \ \mbox{for}\ t > T
\end{array} \right.
\]
belongs to
\[
\mathcal{U}[\vr_0, \vm_0, \mathcal{E}_0, \vc{d}_0].
\]

\end{itemize}

The following result was proved by Cardona and Kapitanski \cite{CorKap}, see also \cite{BreFeiHof19}.

\begin{Proposition} \label{SSP1}

Suppose that both the data space $\mathcal{D}_E$ and the trajectory space $\mathcal{T}$ are Polish spaces. Let
$\mathcal{U}$,
\[
\mathcal{U}: \mathcal{D}_E \to 2^\mathcal{T}
\]
be a set--valued mapping satisfying the axioms {\rm [A1] -- [A5]}.

The there exists a measurable semi--flow selection $\vc{U}$ -- a mapping
\[
\vc{U}: \mathcal{D}_E \to \mathcal{T},
\]
enjoying the following properties:
\begin{itemize}
\item
\[
\vc{U}(\vr_0, \vm_0, \mathcal{E}_0, \vc{d}_B)  \in \mathcal{U}[\vr_0, \vm_0, \mathcal{E}_0, \vc{d}_B]
\ \mbox{for any}\ [\vr_0, \vm_0, \mathcal{E}_0, \vc{d}_B] \in \mathcal{D}_E.
\]
\item
\[
\vc{U}: \mathcal{D}_E \to \mathcal{T} \ \mbox{is Borel measurable}.
\]
\item
\[
\vc{U}(\vr_0, \vm_0, \mathcal{E}_0, \vc{d}_B) (t + s) =
\vc{U} \left( \vc{U}(s, \cdot)[\vr_0, \vm_0, \mathcal{E}_0, \vc{d}_B], \vc{d}_0 \right) (t) \ \mbox{for any}\ t,s \geq 0.
\]

\end{itemize}

\end{Proposition}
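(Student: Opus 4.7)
The plan is to follow the Krylov--Cardona--Kapitanski selection scheme, successively refining the set-valued map $\mathcal{U}$ by minimizing a countable, separating family of continuous functionals on the trajectory space $\mathcal{T}$. Since $\mathcal{T}$ is Polish, fix a countable collection $\{ F_n \}_{n \in \mathbb{N}} \subset C_b(\mathcal{T})$ that separates points (hence separates compact subsets in the sense of integration), and a countable dense set $\{\lambda_n\} \subset (0,\infty)$. For any pair $(\lambda, F)$ and any set-valued map $\mathcal{V}$ satisfying [A1]--[A5], define the Laplace-type functional
\[
I_{\lambda,F}[U] \ =\ \int_0^\infty e^{-\lambda t} F\bigl(U(t)\bigr) \dt,
\]
and consider the refined map
\[
\mathcal{V}^{\lambda,F}[\vr_0,\vm_0,\mathcal{E}_0,\vc{d}_B] \ =\ \Big\{ U \in \mathcal{V}[\vr_0,\vm_0,\mathcal{E}_0,\vc{d}_B] \ \Big|\ I_{\lambda,F}[U] = \min_{V \in \mathcal{V}[\vr_0,\vm_0,\mathcal{E}_0,\vc{d}_B]} I_{\lambda,F}[V] \Big\}.
\]

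Next I would verify that the refinement $\mathcal{V} \mapsto \mathcal{V}^{\lambda,F}$ preserves all axioms [A1]--[A5]. Existence and compactness [A1], [A2] are immediate since $I_{\lambda,F}$ is continuous on the compact set $\mathcal{V}[d]$. Measurability [A3] is the delicate point: one has to show that the multifunction $d \mapsto \mathcal{V}^{\lambda,F}[d]$ is Borel with respect to the Hausdorff-complementary topology. This follows from the fact that $d \mapsto \mathcal{V}[d]$ is Borel measurable, the map $U \mapsto I_{\lambda,F}[U]$ is continuous on $\mathcal{T}$ (by dominated convergence in the Skorokhod/Polish setting), and the value function $d \mapsto \min_{U \in \mathcal{V}[d]} I_{\lambda,F}[U]$ is Borel by a standard measurable maximum theorem. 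The shift property [A4] is preserved because the exponential weight decouples time translations up to the factor $e^{-\lambda T}$: any minimizer over $\mathcal{V}[d]$ shifted by $T$ remains a minimizer over $\mathcal{V}[U(T), \vc{d}_B]$, using [A5] to glue trajectories. Continuation [A5] is checked by a similar concatenation argument, where the energy inequality (c\`agl\`ad monotonicity) guarantees that concatenated solutions still belong to $\mathcal{U}$.

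Then I would enumerate all pairs $(F_n, \lambda_n)$ and iterate the refinement, producing a decreasing sequence of Borel multifunctions $\mathcal{U} \supset \mathcal{U}_1 \supset \mathcal{U}_2 \supset \cdots$, each satisfying [A1]--[A5]. Set
\[
\mathcal{U}_\infty[d] \ =\ \bigcap_{k \in \mathbb{N}} \mathcal{U}_k[d].
\]
Compactness and the finite intersection property force $\mathcal{U}_\infty[d] \neq \emptyset$, and since the family $\{(F_n,\lambda_n)\}$ separates points in $\mathcal{T}$, any two elements of $\mathcal{U}_\infty[d]$ must yield the same value of every $I_{\lambda_n, F_n}$, hence coincide. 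This produces a single-valued Borel map $\vc{U}: \mathcal{D}_E \to \mathcal{T}$, which by construction lies in $\mathcal{U}[d]$ and is Borel measurable.

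Finally, the semigroup property is proved by uniqueness within $\mathcal{U}_\infty$. For fixed $s \geq 0$ and data $d = [\vr_0, \vm_0, \mathcal{E}_0, \vc{d}_B]$, both trajectories
\[
t \mapsto \vc{U}(d)(t+s) \qquad \mbox{and} \qquad t \mapsto \vc{U}\Big(\vc{U}(d)(s), \vc{d}_B\Big)(t)
\]
belong to $\mathcal{U}[\vc{U}(d)(s), \vc{d}_B]$ by [A4] and [A1]; being minimizers of every $I_{\lambda_n, F_n}$ over that set (the first by shift-invariance of the refinement, the second by construction), they must coincide in $\mathcal{U}_\infty[\vc{U}(d)(s), \vc{d}_B]$. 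I expect the main obstacle to be the technical verification that the argmin refinement preserves Borel measurability in the Hausdorff-complementary topology and, simultaneously, the shift property [A4]; the latter requires showing that the minimization on shifted data is compatible with the exponential weighting, which is where the use of Laplace transforms (rather than, say, finite-horizon integrals) is crucial.
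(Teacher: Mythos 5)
Your proposal reproduces the Krylov--Cardona--Kapitanski selection scheme (successive argmin refinement by the Laplace-type functionals $\int_0^\infty e^{-\lambda t}F(U(t))\,\dt$, preservation of the axioms [A1]--[A5] under refinement via the concatenation/shift identities, nested compact intersection, and separation to get a singleton), which is precisely the proof the paper invokes by citation to Cardona--Kapitanski and confirms in its discussion of maximal solutions. The only blemish is notational: the $F_n$ must be a separating family of bounded continuous functions on the \emph{state} space (as your formula $F(U(t))$ in fact requires), not on the trajectory space $\mathcal{T}$.
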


\begin{Remark} \label{SRP1}

As the mapping
\[
\vc{U}: \mathcal{D}_E \to \mathcal{T}
\]
is Borel measurable, we observe that
\[
\vc{U}(t): \mathcal{D}_E \to \mathcal{D}_E \ \mbox{is Borel measurable for any}\ t \geq 0,
\]
and
\[
\vc{U}: [0, \infty) \times \mathcal{D}_E \to \mathcal{D}_E
\]
is jointly Borel measurable. 
Indeed for each trajectory
\[
\vc{U}: [0, \infty) \to \mathcal{T}
\]
we introduce its left regularization
\[
\vc{U}_\ep (t) = \frac{1}{\ep} \int_{t - \ep}^t \vc{U}(s) \ {\rm d}s, \ t \geq 0,
\]
where $\vc{U}$ has been extended to be constant in $[-1,0]$. The functions $\vc{U}_\ep$ being continuous are jointly
(Borel) measurable in $[0, \infty) \times \mathcal{D}_E$. By the same token, the
mapping
\[
\vc{U}_\ep (t): \mathcal{D}_E \to \mathcal{D}_E \ \mbox{is Borel measurable.}
\]
As $\vr$, $\vm$ are continuous functions of $t$ and $\Ecd$ is c\` agl\` ad, $\vc{U}_\ep$ converges to $\vc{U}$ \emph{pointwise} in $[0, \infty)
\times \mathcal{D}_E$ as $\ep \to 0$.

\end{Remark}

\bigskip

In the present setting, axiom [A1] is satisfied in view of the existence result stated in Proposition \ref{SP1}. Axioms
[A4], [A5] can be verified in the same way as in Basari\v c \cite{Basa1}, \cite{BreFeiHof19}. Finally, as observed in \cite{BreFeiHof19},
axioms [A2], [A3] follow
from the property of weak sequential stability stated below.

\begin{Proposition}[Weak sequential stability] \label{SSP2}

Let
\[
[\vr_{0,n}, \vm_{0,n}, \mathcal{E}_{0,n}, \vc{d}_{B,n}] \in \mathcal{D}_E
\to [\vr_{0}, \vm_{0}, \mathcal{E}_{0}, \vc{d}_{B}] \in \mathcal{D}_E
\]
in the topology of $\widetilde{X}_{\mathcal{D}_E}$.

Then any sequence of finite energy weak solutions $[\vr_n, \vm_n, \mathcal{E}_{\rm cg,n}]$ contains a subsequence
(not relabeled here) such that
\[
[\vr_n, \vm_n, \mathcal{E}_{\rm cg,n}] \to [\vr, \vm, \Ecd] \ \mbox{in}\ \mathcal{T},
\]
where $[\vr, \vm, \Ecd]$ is a finite energy weak solutions corresponding to the data $[\vr_{0}, \vm_{0}, \mathcal{E}_{0}, \vc{d}_{B}]$.

\end{Proposition}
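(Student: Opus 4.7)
The proof follows the classical compactness strategy for the compressible barotropic Navier--Stokes system, with the two extra features of inhomogeneous boundary data $\vc{d}_{B,n}$ and the additional c\`agl\`ad energy component $\mathcal{E}_{{\rm cg},n}$ in the trajectory space. \emph{Uniform bounds.} Since $\mathcal{E}_{0,n}\to \mathcal{E}_0$ and $\vc{d}_{B,n}\to\vc{d}_B$ in $\widetilde{X}_{\mathcal{D}_E}$, the right-hand side of \eqref{SM8} applied to each $[\vr_n,\vm_n]$ is controlled by the energy itself plus a uniform function of the data. A Gr\"onwall argument (using Korn's inequality and the uniform $C^1(\Ov{\Omega})$-bound on $\vu_{B,n}$) produces uniform estimates on $\sup_t\intO{E(\vr_n,\vm_n\mid \vu_{B,n})}$, on $\vu_n-\vu_{B,n}$ in $L^2_{\rm loc}(0,\infty;W^{1,2}_0(\Omega;R^d))$, on the trace $\vr_n|_{\Gamma_{\rm out}}$ in the weighted $L^\gamma$-space, and on the total dissipation. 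Equi-integrability of $p(\vr_n)$ up to $\partial\Omega$ follows from a Bogovskii-type test function as recalled in Section~\ref{ET}, following Kuku\v cka \cite{Kuk} and \cite{FP9}. From \eqref{M1}--\eqref{M2} one extracts negative Sobolev bounds on $\partial_t\vr_n,\partial_t\vm_n$; the Arzel\`a--Ascoli theorem in $W^{-k,2}$ then yields, along a subsequence, $\vr_n\to\vr$ in $C_{\rm loc}([0,\infty);W^{-k,2}(\Omega))$, $\vm_n\to\vm$ in $C_{\rm loc}([0,\infty);W^{-k,2}(\Omega;R^d))$, together with weak limits $\vu_n\rightharpoonup\vu$ in $L^2_{\rm loc}(0,\infty;W^{1,2}(\Omega;R^d))$.

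\emph{Skorokhod convergence of the energy.} The total variation of each $\mathcal{E}_{{\rm cg},n}$ on $[0,T]$ is controlled by the Step~1 estimates plugged into \eqref{SM8}. Helly's selection theorem yields pointwise convergence, away from a countable set, to a c\`agl\`ad limit $\Ecd$ with $\Ecd(0-)=\mathcal{E}_0$, and a standard refinement (cf.\ Jakubowski \cite{Jaku}, Whitt \cite{Whitt}) upgrades this to convergence in the Skorokhod space $\widehat D([0,\infty);R)$.

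\emph{Strong convergence of the density -- main obstacle.} This is the heart of the matter, since one has to pass to the limit in the nonlinearities $p(\vr_n)$ and $\vr_n\vu_n\otimes\vu_n$, as well as in the boundary flux $\vr_n\vu_{B,n}\cdot\vc{n}$ on $\Gamma_{\rm out}$. Since $\gamma>d/2$, I would invoke the Lions--Feireisl machinery: the effective viscous flux identity, obtained by testing the momentum equation with a suitable (localized) inverse divergence of a truncation of $\vr_n$, combined with the renormalized continuity equation \eqref{M1a} for the limit, controls the oscillation defect measure and yields $\vr_n\to\vr$ strongly in $L^1_{\rm loc}([0,\infty)\times\Omega)$. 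The equi-integrability of the pressure up to $\partial\Omega$ then upgrades this to strong convergence in $L^q_{\rm loc}([0,\infty)\times\Ov{\Omega})$ for some $q>1$, which, via the trace relation \eqref{trace} and the uniform convergence $\vu_{B,n}\cdot\vc{n}\to\vu_B\cdot\vc{n}$, also delivers convergence of the boundary flux. This step relies crucially on the modifications to the existence proof of Chang, Jin, and Novotn\'y \cite{ChJiNo} indicated in Section~\ref{ET}.

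\emph{Identification of the limit.} With these compactness properties in hand, passage to the limit in \eqref{M1}, \eqref{M1a} and \eqref{M2} is routine, using the uniform convergences $\vr_{B,n}\to\vr_B$, $\vu_{B,n}\to\vu_B$, $\vc{g}_n\to\vc{g}$. For the energy inequality \eqref{M3}, the non-negativity of $\mathbb{S}(\Ds\vu):\Ds\vu$ together with the convexity and weak lower semicontinuity of $E(\cdot,\cdot\mid\vu_B)$ and $P(\cdot)$ allow a Fatou-type passage on the left-hand side, while the right-hand side passes to the limit exactly thanks to the strong density convergence. The identity \eqref{M3a} for the limit is recovered by combining the Skorokhod convergence of Step~2 with the strong convergence of $[\vr_n,\vm_n]$, and attainment of the initial data $[\vr_0,\vm_0]$ is inherited from the $C_{\rm weak}$-continuity of $\vr_n$, $\vm_n$ and the convergence of the initial data in $\widetilde{X}_{\mathcal{D}_E}$. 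This identifies $[\vr,\vm,\Ecd]$ as a finite energy weak solution with data $[\vr_0,\vm_0,\mathcal{E}_0,\vc{d}_B]$.
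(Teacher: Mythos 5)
Your Steps 1, 3 and 4 track the paper's own strategy: the authors likewise defer the uniform bounds, the effective--viscous--flux/renormalization argument for strong convergence of the density, and the passage to the limit in \eqref{M1}--\eqref{M3} to the existence proof of Chang, Jin and Novotn\'y with the modifications of Section \ref{ET}, and they explicitly single out the convergence of the energies in $\widehat{D}([0,\infty);R)$ as ``the only additional issue''. That issue is exactly where your argument has a genuine gap. Helly's theorem does give a subsequence of $\mathcal{E}_{{\rm cg},n}$ converging pointwise (the uniform $BV_{\rm loc}$ bound is fine), but pointwise convergence of uniformly--$BV$ functions does \emph{not} upgrade to Skorokhod convergence by any ``standard refinement'': the moving spikes $f_n=\mathbf{1}_{[1/2-1/n,\,1/2+1/n)}$ are uniformly bounded in $BV$ and converge pointwise to $\mathbf{1}_{\{1/2\}}$, yet have no limit in the Skorokhod space. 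The mechanism that makes the upgrade work is \emph{monotonicity}, and the energies themselves are not monotone. The paper therefore first monotonizes: it sets $\mathcal{G}_n(t)=\mathcal{E}_{{\rm cg},n}(t)+\int_0^t F_n\,{\rm d}s$ with $F_n$ as in \eqref{TrS1}, so that by the energy inequality each $\mathcal{G}_n$ is non--increasing and bounded by $\mathcal{E}_{0,n}$; it shows $\int_0^\cdot F_n\to\int_0^\cdot F$ in $C_{\rm loc}[0,\infty)$ from the uniform bounds; and it then invokes Whitt's characterization (Corollary 12.5.1) that for \emph{monotone} functions Skorokhod convergence is equivalent to pointwise convergence on a dense set of times. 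That dense--set convergence is obtained not from Helly but from the a.e.\ identity $\mathcal{E}_{{\rm cg},n}(t)=\intO{E(\vr_n,\vm_n\,|\,\vu_{B,n})(t,\cdot)}$ combined with the $L^1_{\rm loc}$ convergence of these integrals (along a further subsequence), which simultaneously identifies the limit as the correct c\`agl\`ad energy $\Ecd$ satisfying \eqref{M3a} --- an identification your Helly limit would not automatically provide at the exceptional times. Since everything else is deferred to the literature, this monotonization step is precisely the substantive content of the proof, and it is missing from your proposal.
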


\begin{Remark} \label{SRP2}

We point out that it is aboslutely necessary here to consider the topology of the space $\wtilde X_{\mc{D}_E}$, in particular, the initial densities must converge \emph{strongly} in $L^\gamma(\Omega)$.

\end{Remark}

\begin{proof}

Revisiting the proof of global existence in \cite{ChJiNo}, Proposition \ref{SSP2} basically coincides with the last step
of the existence proof - the artificial pressure limit - with the necessary modifications indicated in Section \ref{ET} above. The only
additional issue is therefore showing convergence of the energies,
\begin{equation} \label{TrS2}
\mathcal{E}_{{\rm cg,n}} \to \Ecd \ \mbox{in the Skorokhod space}\ \widehat{D}([0, \infty); R).
\end{equation}

In view of the energy inequality \eqref{M3}, the functions
\begin{equation} \label{TrS1}
\begin{split}
\mathcal{G}_n (t) &\equiv \mathcal{E}_{\rm cg,n}(t) + \int_0^t F_n (t) \dt,\\ \mbox{where} \
F_n &\equiv
\int_{\Gamma_{\rm in}} P(\vr_{B,n}) \vu_{B,n} \cdot \vc{n} \ \D S_x +
\intO{ \left[ \vr_n \vu_n \otimes \vu_n + p(\vr_n) \mathbb{I} \right]  :  \Grad \vu_{B,n} }  + \intO{ {\vr_n} \vu_n  \cdot \vu_{B,n} \cdot \Grad \vu_{B,n}  }
\\ &- \intO{ \mathbb{S}(\Ds \vu_n) : \Ds \vu_{B,n} }  - \intO{ \vr_n \vc{g}_{n} \cdot (\vu_n - \vu_{B,n}) },
\end{split}
\end{equation}
are non--increasing and bounded above by $\mathcal{E}_{0,n}$ in $[0, \infty)$. By virtue of the uniform energy bounds and the
compactness arguments, we deduce
\[
\left[ \tau \mapsto \int_0^\tau F_n(t) \dt \right] \to \left[ \tau \mapsto \int_0^\tau F(t) \dt \right] \ \mbox{in}\ C_{\rm loc}[0, \infty),
\]
where
\[
\begin{split}
F &\equiv
\int_{\Gamma_{\rm in}} P(\vr_{B}) \vu_{B} \cdot \vc{n} \ \D S_x +
\intO{ \left[ \vr \vu \otimes \vu + p(\vr) \mathbb{I} \right]  :  \Grad \vu_{B} }  + \intO{ {\vr} \vu  \cdot \vu_{B} \cdot \Grad \vu_B  }
\\ &- \intO{ \mathbb{S}(\Ds \vu) : \Ds \vu_{B} }  - \intO{ \vr \vc{g} \cdot (\vu - \vu_B) }.
\end{split}
\]

Accordingly, for \eqref{TrS2} to hold, it is enough to show
\[
\mathcal{G}_n \to \mathcal{G}\ \mbox{in}\ D([-1, \infty); R),
\]
where we have extended
\[
\mathcal{G}_n (t) = \mathcal{E}_{0,n}\ \mbox{for}\ t \in [-1,0],
\]
and where
\[
\mathcal{G}(\tau) = \left\{ \begin{array}{ll} \mathcal{E}_0 \quad & \mbox{if}\ \tau \in [-1,0],\\
\Ecd(\tau) + \int_0^\tau F(t) \dt \quad & \mbox{if}\ \tau>0. \end{array} \right.
\]
As $\mathcal{G}_n$ are non--increasing, and, obviously, converge uniformly for $t \in [-1,0]$ to $\mathcal{G}$, the convergence
in the space $D[-1, \infty; R)$ is equivalent to showing
\begin{equation} \label{TrS3}
\mathcal{G}_n(t) \to \mathcal{G}(t)\ \Leftrightarrow \
\mathcal{E}_{\rm cg,n}(t) \to \Ecd(t) \ \mbox{for a dense set of times}\ t \in (0, \infty),
\end{equation}
see Whitt \cite[Corollary 12.5.1]{Whitt}. Seeing that
\[
\Ecd(t) = \intO{ E \left( \vr, \vm \Big| \vu_B \right)(t, \cdot) } \ \mbox{for a.a.}\ t \in (0, \infty),
\]
and
\[
\intO{ E \left( \vr_n, \vm_n \Big| \vu_{B,n} \right) } \to
\intO{ E \left( \vr, \vm \Big| \vu_{B} \right) } \ \mbox{in}\ L^1_{\rm loc}[0, \infty),
\]
we may infer that \eqref{TrS3} holds for a.a. $t \in (0, \infty)$, passing to a suitable subsequence as the case may be.
\end{proof}

Consequently, we may apply the abstract result stated in Proposition \ref{SSP1} to the family of finite energy weak solutions
to the Navier--Stokes system.

\begin{Proposition}[Semiflow selection] \label{PrS1}
To each data
\[
[\vr_0, \vm_0, \mathcal{E}_0, \vc{d}_B] \in \mathcal{D}_E
\]
we can associate a finite energy weak solution of the Navier--Stokes system \eqref{i1}--\eqref{i4} in $[0, \infty) \times \Omega$,
\[
[\vr, \vm, \Ecd] \in \mathcal{T},\ [\vr, \vm, \Ecd] = [\vr, \vm, \Ecd]\Big(t; [\vr_0, \vm_0, \mathcal{E}_0, \vc{d}_B]\Big)
\]
in such a way that the following holds:
\begin{itemize}
\item
\[
[\vr, \vm, \Ecd]\Big(t + s; [\vr_0, \vm_0, \mathcal{E}_0, \vc{d}_B] \Big) =
[\vr, \vm, \Ecd]\Big( t; [\vr, \vm, \Ecd](s) [\vr_0, \vm_0, \mathcal{E}_0, \vc{d}_B], \vc{d}_B \Big)
\ \mbox{for any}\ s,t \geq 0;
\]
\item
the mapping
\[
[\vr, \vm, \Ecd](t, \cdot): \mathcal{D}_E \to \Big[ W^{-k,2}(\Omega) \times W^{-k,2}(\Omega; R^d) \times R \Big] \ \mbox{is Borel measurable}
\]
for any $t \geq 0$;
\item
the mapping
\[
[\vr, \vm, \Ecd] : [0, \infty) \times \mathcal{D}_E \to \Big[ W^{-k,2}(\Omega) \times W^{-k,2}(\Omega; R^d) \times R \Big]
\]
is jointly Borel measurable. 

\end{itemize}

\end{Proposition}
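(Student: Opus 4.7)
The plan is to obtain Proposition \ref{PrS1} as a direct application of the abstract Cardona--Kapitanski selection result stated in Proposition \ref{SSP1} to the multivalued map
\[
\mathcal{U}: \mathcal{D}_E \to 2^{\mathcal{T}}, \qquad [\vr_0, \vm_0, \mathcal{E}_0, \vc{d}_B] \mapsto \mathcal{U}[\vr_0, \vm_0, \mathcal{E}_0, \vc{d}_B],
\]
whose value at a given datum is the set of \emph{all} finite energy weak solutions $[\vr, \vm, \Ecd] \in \mathcal{T}$ emanating from it. Both $\mathcal{D}_E$ (endowed with the topology of $\widetilde{X}_{\mathcal{D}_E}$, Polish as recalled in Section \ref{ss:data-space}) and $\mathcal{T}$ are Polish, so the only task is to verify the five axioms [A1]--[A5]. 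Axiom [A1] is precisely the content of Proposition \ref{SP1}. Axioms [A4] (shift invariance) and [A5] (continuation at a breakpoint $T$) are verified as in Basari\v c \cite{Basa1} and \cite{BreFeiHof19}: the weak formulation \eqref{M1}--\eqref{M3} uses compactly supported test functions in time, so time translations preserve the notion of solution, and the c\`agl\`ad convention for $\Ecd$ together with the inequality $\Ecd(T-) \geq \mathcal{E}(T) \geq \widetilde{\Ecd}(0+)$ ensures that the concatenated trajectory still satisfies the energy inequality across the junction.

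The substantive step is to derive axioms [A2] (compactness of $\mathcal{U}[\vr_0, \vm_0, \mathcal{E}_0, \vc{d}_B]$) and [A3] (Borel measurability of $\mathcal{U}$ with respect to the Hausdorff complementary topology on compact subsets of $\mathcal{T}$) from the weak sequential stability statement Proposition \ref{SSP2}. For [A2] one applies Proposition \ref{SSP2} to the constant sequence of data $[\vr_0, \vm_0, \mathcal{E}_0, \vc{d}_B]$: any sequence in $\mathcal{U}[\vr_0, \vm_0, \mathcal{E}_0, \vc{d}_B]$ admits a subsequence converging in $\mathcal{T}$ to a limit that still satisfies \eqref{M1}--\eqref{M3} with the same data, hence remains in $\mathcal{U}[\vr_0, \vm_0, \mathcal{E}_0, \vc{d}_B]$. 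For [A3] one again uses Proposition \ref{SSP2}: the graph of $\mathcal{U}$ is closed in $\mathcal{D}_E \times \mathcal{T}$, and the standard characterization (cf.\ \cite{BreFeiHof19}) translates this to Borel measurability in the Hausdorff complementary topology.

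With [A1]--[A5] in hand, Proposition \ref{SSP1} produces a Borel measurable selection $\vc{U}: \mathcal{D}_E \to \mathcal{T}$ satisfying the semigroup identity, which rewritten in terms of the components $[\vr, \vm, \Ecd]$ is exactly the first bullet of the statement. For the second bullet, fix $t \geq 0$ and compose $\vc{U}$ with the evaluation map
\[
\mathrm{ev}_t: \mathcal{T} \to W^{-k,2}(\Omega) \times W^{-k,2}(\Omega; R^d) \times R,\qquad [\vr, \vm, \Ecd] \mapsto [\vr(t), \vm(t), \Ecd(t)];
\]
the first two components of $\mathrm{ev}_t$ are continuous because the trajectory space enforces $C_{\mathrm{loc}}$ regularity in time, and the third is Borel measurable on $\widehat{D}([0, \infty); R)$ since pointwise evaluation at a fixed time is a Borel functional on the Skorokhod space.

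Finally, joint Borel measurability in $(t, [\vr_0, \vm_0, \mathcal{E}_0, \vc{d}_B])$ is obtained via the left--regularization argument outlined in Remark \ref{SRP1}: set
\[
\vc{U}_\ep(t) \equiv \frac{1}{\ep} \int_{t-\ep}^{t} \vc{U}(s) \, \mathrm{d}s, \qquad t \geq 0,
\]
extending $\vc{U}$ to be constant on $[-1, 0]$. Each $\vc{U}_\ep$ is continuous in $t$ and Borel in the data, hence jointly Borel measurable by a Fubini/Carath\'eodory argument. Since $\vr$ and $\vm$ are time--continuous and $\Ecd$ is c\`agl\`ad, $\vc{U}_\ep \to \vc{U}$ pointwise on $[0, \infty) \times \mathcal{D}_E$ as $\ep \to 0$, and pointwise limits of jointly Borel measurable maps are jointly Borel measurable. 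I expect the main obstacle to be the careful verification of [A3], since the Hausdorff complementary topology is subtle, but this is absorbed into the sequential stability bookkeeping already carried out in Proposition \ref{SSP2}.
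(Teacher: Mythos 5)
Your proposal follows essentially the same route as the paper: apply the abstract Cardona--Kapitanski selection result (Proposition \ref{SSP1}) to the solution set map $\mathcal{U}$, with [A1] supplied by Proposition \ref{SP1}, [A4]--[A5] handled as in Basari\v c and \cite{BreFeiHof19}, [A2]--[A3] deduced from the weak sequential stability of Proposition \ref{SSP2}, and joint measurability obtained by the left--regularization argument of Remark \ref{SRP1}. The additional detail you supply (constant data sequence for [A2], closed graph for [A3], evaluation map for the second bullet) is consistent with what the paper leaves implicit, so the argument is correct and matches the paper's proof.
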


Finally, anticipating the situation considered in Theorem \ref{ST1}, we identify the data space $\mathcal{D}$ introduced
in \eqref{SM2} with a Borel subset of $\mathcal{D}$
\[
\mathcal{D} = \left\{ [\vr_0, \vm_0, \mathcal{E}_0, \vc{d}_0] \ \Big|\
\mathcal{E}_0 = \intO{ E\left( \vr_0, \vm_0 \Big| \vu_B \right) } \right\}.
\]

Reformulating the conclusion of Proposition \ref{PrS1}, we obtain the following result that proves the first part of
Theorem \ref{ST1}.

\begin{Proposition} \label{PrS2}

Let $\mathcal{D}$ be the space of data introduced in \eqref{SM2} endowed with the topology of the Banach space $X_{\mathcal{D}}$,
\[
X_{\mathcal{D}} = W^{-k,2}(\Omega) \times W^{-k,2}(\Omega; R^d) \times C(\partial{\Omega}) \times C^1(\Ov{\Omega}; R^d) \times
C(\Ov{\Omega}; R^d).
\]

Then for each data
\[
[\vr_0, \vm_0, \vc{d}_B] \in \mathcal{D}
\]
there exists a finite energy weak
solution $[\vr, \vm]$ of the Navier--Stokes system \eqref{i1}--\eqref{i4} in $[0, \infty) \times \Omega$
such that the mapping
\[
[\vr, \vm]: [0, \infty) \times \mathcal{D} \to
\mathcal{D},\ [\vr, \vm] = [\vr, \vm]\Big(t; [\vr_0, \vm_0, \vc{d}_B]\Big),
\]
enjoys the following properties:
\begin{itemize}
\item
for any $[\vr_0, \vm_0, \vc{d}_B] \in \mathcal{D}$, there exists a set of times $\mathcal{R} \subset [0, \infty)$,
\[
0 \in \mathcal{R},\ \left| [0, \infty) \setminus \mathcal{R} \right| = 0,
\]
and
\[
[\vr, \vm]\Big(t + s; [\vr_0, \vm_0, \vc{d}_B] \Big) =
[\vr, \vm]\Big( t; [\vr, \vm](s) [\vr_0, \vm_0, \mathcal{E}_0, \vc{d}_B], \vc{d}_B \Big)
\ \mbox{for any}\ t \geq 0,\ s \in \mathcal{R};
\]
\item
the mapping
\[
[\vr, \vm](t, \cdot): \mathcal{D} \to \Big[ W^{-k,2}(\Omega) \times W^{-k,2}(\Omega; R^d)\Big] \ \mbox{is Borel measurable}
\]
for any $t \geq 0$;
\item
the mapping
\[
[\vr, \vm] : [0, \infty) \times \mathcal{D} \to \Big[ W^{-k,2}(\Omega) \times W^{-k,2}(\Omega; R^d) \Big]
\]
is Borel measurable in $[0, \infty) \times \mathcal{D}$.

\end{itemize}

\end{Proposition}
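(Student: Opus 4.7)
The plan is to deduce Proposition~\ref{PrS2} directly from Proposition~\ref{PrS1} via the natural embedding $\iota: \mathcal{D} \hookrightarrow \mathcal{D}_E$ defined by
\[
\iota([\vr_0, \vm_0, \vc{d}_B]) = \left[\vr_0, \vm_0, \intO{ E \left( \vr_0, \vm_0 \,\Big|\, \vu_B \right) }, \vc{d}_B\right].
\]
First I would verify that $\iota$ is Borel measurable when $\mathcal{D}$ carries the topology of $X_{\mathcal{D}}$ and $\mathcal{D}_E$ is equipped with either $\widetilde{X}_{\mathcal{D}_E}$ or $X_{\mathcal{D}_E}$. On each sub-level $\mathcal{D}_{E,N}$ defined through $\mathcal{E}_0 \leq N$, the weak topology on $L^\gamma \times L^{\frac{2\g}{\g+1}}$ and the $W^{-k,2} \times W^{-k,2}$ topology coincide, as observed at the end of Section~\ref{ss:data-space}. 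The pressure potential being convex, the energy functional $[\vr_0, \vm_0, \vu_B] \mapsto \intO{ E(\vr_0, \vm_0|\vu_B) }$ is weakly lower semi-continuous, so each sub-level is Borel in $X_{\mathcal{D}}$, and decomposing $\mathcal{D}$ into a countable union of these sub-levels shows that $\iota$ is Borel measurable and that $\iota(\mathcal{D})$ is a Borel subset of $\mathcal{D}_E$.

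Next I would define the selection
\[
[\vr, \vm] \bigl(t; [\vr_0, \vm_0, \vc{d}_B] \bigr) := \pi_{1,2} \circ \vc{U} \bigl(t; \iota(\vr_0, \vm_0, \vc{d}_B) \bigr),
\]
where $\pi_{1,2}$ denotes the projection onto the first two components of the trajectory and $\vc{U}$ is the semiflow selection furnished by Proposition~\ref{PrS1}. Joint Borel measurability on $[0, \infty) \times \mathcal{D}$, as well as Borel measurability in $[\vr_0, \vm_0, \vc{d}_B]$ for each fixed $t \geq 0$, are inherited immediately from the corresponding properties of $\vc{U}$ composed with the Borel map $\iota$. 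The identity $[\vr, \vm](0; \cdot) = [\vr_0, \vm_0]$ is built in by Proposition~\ref{PrS1}, and by construction the trajectory $t \mapsto [\vr, \vm](t; \cdot)$ is a finite-energy weak solution in the sense of Definition~\ref{MD1} with initial energy $\mathcal{E}_0 = \intO{ E(\vr_0, \vm_0|\vu_B) }$, so property~\eqref{SSM4b} of Theorem~\ref{ST1} follows.

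The main obstacle is the semigroup property, whose degradation to an ``almost'' relation is intrinsic to the construction. From Proposition~\ref{PrS1} we have the exact identity
\[
\vc{U}\bigl(t+s; \iota(\vr_0, \vm_0, \vc{d}_B)\bigr) = \vc{U}\bigl(t; \vc{U}(s; \iota(\vr_0, \vm_0, \vc{d}_B)), \vc{d}_B\bigr)
\]
on $\mathcal{D}_E$ for all $s, t \geq 0$, but the third coordinate of $\vc{U}(s; \cdot)$ is $\Ecd(s)$, which merely satisfies $\Ecd(s) \geq \mathcal{E}(s)$ by \eqref{MR4}, with equality only on the set of Lebesgue points of $\mathcal{E}$ on account of \eqref{MR5}. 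Thus I would define
\[
\mathcal{R} := \{ 0 \} \cup \left\{ s > 0 \, : \, \Ecd(s) = \intO{ E(\vr(s), \vm(s) | \vu_B)} \right\};
\]
the BV property of $\Ecd$ and the almost-everywhere coincidence \eqref{MR5} ensure $|[0, \infty) \setminus \mathcal{R}| = 0$. For $s \in \mathcal{R}$, the endpoint $\vc{U}(s; \iota(\vr_0, \vm_0, \vc{d}_B))$ lies in $\iota(\mathcal{D})$, so projecting the exact semigroup identity through $\pi_{1,2}$ yields the desired a.a.\ semigroup relation. The exceptional times are precisely those at which the càglàd energy strictly exceeds the instantaneous energy of the solution, and they form a Lebesgue-null (though measure-dependent) set.
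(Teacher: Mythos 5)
Your proposal is correct and follows essentially the same route as the paper, which simply ``reformulates'' Proposition \ref{PrS1} after identifying $\mathcal{D}$ with the Borel subset of $\mathcal{D}_E$ on which $\mathcal{E}_0 = \intO{E(\vr_0,\vm_0\,|\,\vu_B)}$. In fact you supply more detail than the paper does, correctly isolating the exceptional set $\mathcal{R}$ as the times where $\Ecd(s)$ coincides with the instantaneous energy (so that the state at time $s$ returns to $\iota(\mathcal{D})$), which is precisely the mechanism behind the a.a.\ semigroup property via \eqref{MR4}--\eqref{MR5}.
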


\section{Statistical solutions - proof of Theorem \ref{ST1}}
\label{s:ST}

Our ultimate goal is to complete the proof of Theorem \ref{ST1} and to show Corollary \ref{SC1}. The existence of
the mapping $[\vr, \vm]$ satisfying \eqref{SSM4}--\eqref{SSM4c}
has been established in Proposition \ref{PrS2}.

In accordance with \eqref{SSM5}, we set
\[
\int_{X_{\mathcal{D}}} \Phi \Big( \vr, \vm, \vc{d}_B \Big) \ \D \mathcal{V}_t(\vr, \vm, \vc{d}_B) \equiv
\int_{\mathcal{D}} \Phi \Big( \vr(t; \vr_0, \vm_0, \vc{d}_0) , \vm (t; \vr_0, \vm_0, \vc{d}_B), \vc{d}_B \Big)
\D \mathcal{V}_0 (\vr_0, \vm_0, \vc{d}_B),
\]
for any $\Phi \in BC(\mathcal{D})$,
meaning $\mathcal{V}_t$ is the pushforward measure associated to the mapping $[\vr, \vm](t; \cdot)$. The desired relation
\eqref{SM4} is then obtained by integrating \eqref{SM46} with respect to $\mathcal{V}_0$. Note that the boundary trace
of the density $\vr|_{\partial \Omega}$ as well as the velocity field $\vu$ are uniquely determined as distributions
in $(0, \infty) \times [\partial \Omega, \Omega]$ in terms of $[\vr, \vm, \vc{d}_B]$, namely
\[
\int_0^\infty \int_{\partial \Omega} \varphi \; \vr|_{\partial \Omega}\; \vu_B \cdot \vc{n} \ \D \ S_x
 =
\int_0^\infty \intO{ \Big[ \vr \partial_t \varphi + \vm \cdot \Grad \varphi \Big] } \dt
\]
for any $\varphi \in C^1_c((0, \infty) \times \oline{\Omega})$, and
\[
\begin{split}
&\int_0^\infty \intO{ \mathbb{S}(\Ds \vu) : \Grad \bfphi } \\ &=
\int_0^\infty \intO{ \Big[ \vm \cdot \partial_t \bfphi + \frac{\vm \otimes \vm}{\vr} : \Grad \bfphi
+ p(\vr) \Div \bfphi + \vr \vc{g} \cdot \bfphi \Big] },\ \vu|_{\partial \Omega} = \vu_B,
\end{split}
\]
for any $\bfphi \in C^1_c((0,\infty) \times {\Omega};R^d)$. In particular, all arguments in the integrals in
\eqref{SM4} are $\oline{\dt \times \mathcal{V}_0}$ measurable. We have proved Theorem \ref{ST1}.

To see Corollary \ref{SC1}, we have to establish the a.a. semigroup property of the Markov operators $M_t$. To this end, we write
\[
\begin{split}
\int_0^\infty &\psi(s) \int_{\mathcal{D}} \Phi(\vr, \vm, \vc{d}_B ) \D M_{t + s}[\mc V] \D s =
\int_0^\infty \psi(s) \left[ \int_{\mathcal{D}}
\Phi \Big( [\vr, \vm] (t + s; [\vr_0, \vm_0, \vc{d}_B]),  \vc{d}_0 \Big) \D \mc V \right] \D s\\
&=\int_0^\infty \psi(s) \left[ \int_{\mathcal{D}}
\Phi \left( [\vr, \vm] \Big(t; \vr(s; \vr_0, \vm_0, \vc{d}_B), \vm (s; \vr_0, \vm_0, \vc{d}_B), \vc{d}_B \Big),
 \vc{d}_B \right) \D \mc V \right] \D s
\end{split}
\]
for any $\psi \in C_c(0, \infty)$. Consequently,
\[
\int_{\mathcal{D}} \Phi(\vr, \vm, \vc{d}_B) \D M_{t + s}[\mc V]  = \int_{\mathcal{D}}
\Phi \left( [\vr, \vm] (s; \vr_0, \vm_0, \vc{d}_B),
 \vc{d}_0\right) \D M_t (\mc V) = \int_{\mathcal{D}} \Phi (\vr, \vm, \vc{d}_B ) \D (M_t \circ M_s)[\mc V]
\]
for any $t \geq 0$ and a.a. $s \in (0, \infty)$. We have proved Corollary \ref{SC1}.

\section{Conclusion, continuity with respect to the initial data}
\label{C}

We have shown the existence of \emph{statistical solution} to the barotropic Navier--Stokes system with general in/out flux
boundary conditions. The statistical solution is a family $\{M_t\}_{t \geq 0}$ of Markov operators defined on the set
$\mathfrak{P}[\mathcal{D}]$ of probability measures on the data space $\mathcal{D}$ containing the initial and boundary data.
The family enjoys the a.a. semigroup property:
\[
M_{t + s}[\nu] = M_t \circ M_s[\nu] \ \mbox{for any}\ t \geq 0, \ \mbox{and a.a.}\ s \in [0, \infty),
\nu \in \mathfrak{P}[\mathcal{D}],
\]
where the exceptional set of times $s$ depends on $\nu$. If, in addition,
\[
\nu = \delta_{[\vr_0, \vm_0, \vc{d}_0]}
\]
then
\[
M_t[\nu] = \delta_{[\vr(t, \cdot) , \vm(t, \cdot), \vc{d}_0]} \ \mbox{for all}\ t \geq 0,
\]
where $[\vr, \vm]$ is a finite energy weak solution to the Navier--Stokes system with the data
$[\vr_0, \vm_0, \vc{d}_0]$, and
\[
\mathcal{E}_0 = \intO{ E \left(\vr_0, \vm_0 \Big| \vu_B \right) }.
\]
The semigroup property and the fact that the image of a Dirac delta is again a Dirac delta are the two main novelties of our theory of statistical solutions, with respect to the works for
incompressible Navier-Stokes equations \cite{FoRoTe3}, \cite{FoRoTe2}.

Our result is restricted to the pressure--density EOS \eqref{MH2} with $\gamma > \frac{d}{2}$. The boundary data are
time independent, however, the extension to non--autonomous problem is possible. The fact that the semigroup of Markov operators
is defined for a.a. time $s$ is related to the right--continuity of the energy -- strong (right) continuity of the weak solution-- at the
time $s$. Given the present state--of--the--art of the mathematical theory of the compressible Navier--Stokes system, strong
(right) continuity of the weak solutions remains an outstanding open problem.

\[
\nu \in \mathcal{D} \mapsto M_t [ \nu ] \in \mathcal{D} 
\]
is not (known to be) continuous. This is obviously related to the lack of information on \emph{uniqueness} of finite energy weak solutions. On the other hand, regular 
initial data are likely to give rise to unique regular solutions, cf. Matsumura and Nishida \cite{MANI1}, \cite{MANI}. In the following two sections, we discuss stability 
of regular data in the context of statistical solutions.

\subsection{Stability of strong solutions}

The finite energy weak solutions introduced in Definition \ref{MD1} enjoy the weak--strong uniqueness property,
see \cite[Theorem 6.3]{AbbFeiNov}, and also Kwon et al. \cite{KwoNovSat}. Specifically, if the initial and boundary data
\[
[\vr_0, \vm_0, \vc{d}_B],\ \vr_0 > 0 \ \mbox{uniformly in}\ \Omega, \ \mathcal{E}_0 = \intO{ E \left( \vr_0, \vm_0 \Big| \vu_B \right)
},
\]
give rise to a strong (Lipschitz) solution $[\tvr, \tvm]$ defined on $[0, T_{\rm max})$, then all finite energy solutions coincide with
$[\tvr, \tvm]$. In particular,
\[
M_t (\delta_{[\vr_0, \vm_0, \vc{d}_B]}) = \delta_{[\tvr(t, \cdot), \tvm(t, \cdot), \vc{d}_B]}
\ \mbox{for all}\ t \in [0, T_{\rm max}).
\]

\begin{Remark} \label{rL1}
As a matter of fact, \cite[Theorem 6.3]{AbbFeiNov} requires $C^1$--regularity of the strong solution as it applies to a larger
class of dissipative solutions introduced therein. It is easy to check that the result can be extended to Lipschitz solutions
as long as we deal with standard distributional weak solutions used in the present paper.
\end{Remark}

We introduce the set of regular data $\mathcal{D}_R$,
\[
\begin{split}
\mathcal{D}_R \equiv &\left\{[\vr_0, \vm_0, \vc{d}_B]\in\mc D\;\big|\quad
\mbox{there exists a solution}\
 [\vr,\vm] \in W^{1, \infty}((0,T) \times \Omega)),\ \inf_{(0,T) \times \Omega} \vr > 0, \right. \\ &\ \ \
0 \leq T < T_{\rm max}  \Big\}
\end{split}
\]
for some $T_{\rm max} = T_{\rm max}[\vr_0, \vm_0, \vc{d}_B] >0$.
Suppose now that $\nu\in\mathfrak{P}[\mathcal{D}]$ is such that ${\rm supp}[\nu] \subset \mathcal{D}_R$.
As a direct consequence of the weak--strong uniqueness principle, we have
\[
M_t \left( \nu \right) = \int_{\mathcal{D}} \delta_{[\tvr(t, \cdot),
\tvm(t, \cdot), \vc{d}_B]} \D \nu (\vr_0, \vm_0, \vc{d}_B)\ \ \mbox{for all}\ 0 \leq t
< \inf_{[\vr_0, \vm_0, \vc{d}_b] \in {\rm supp}[\nu]} T_{\rm max}[\vr_0, \vm_0, \vc{d}_B].
\]
Thus $M_t$ is uniquely determined, at least locally in time, as soon as the support of the initial measure $\nu$ is contained in the set $\mathcal{D}_R$ of the data which give rise to a smooth solution.

\subsection{Continuity property of statistical solutions} \label{ss:cont}

To discuss \emph{continuity} of a statistical solution $M_t(\nu)$ in $\nu$, we need a suitable distance on the set of data $\mathcal{D}$. To this end, 
following \cite[Section 5]{AbbFeiNov}, we introduce the relative energy,
\[
E \left( \vr, \vm \Big| \tvr, \tvm \right) \equiv \frac{1}{2} \vr \left| \frac{\vm}{\vr} - \frac{\tvm}{\tvr} \right|^2 + \Big( P(\vr) - P'(\tvr)(\vr - \tvr) - P(\tvr) \Big)
\]
together with
\[
\mathcal{E} \left( \vr, \vm \Big| \tvr, \tvm \right) \equiv \intO{ E \left( \vr, \vm \Big| \tvr, \tvm \right) }.
\]
Note that $E \left( \vr, \vm \Big| \tvr, \tvm \right)$ can be seen as \emph{Bregman divergence (distance)} associated to the convex functional
\[
E( \vr, \vm) = \left\{ \begin{array}{l} \frac{1}{2} \frac{|\vm|^2}{\vr} + P(\vr) \ \mbox{if}\ \vr > 0, \\  0 \ \mbox{if}\ \vm = 0, \vr = 0, \\ 
\infty \ \mbox{otherwise.} \end{array} \right., 
\]
meaning
\[
E \left( \vr, \vm \Big| \tvr, \tvm \right) = E(\vr , \vm ) - \left< \partial_{\vr, \vm} E(\tvr, \tvm); [\vr - \tvr; \vm - \tvm] \right> - E(\tvr, \tvm),
\]
see e.g. Sprung \cite{Sprung}. Motivated by Guo et al. \cite{GuHoLiYa}, we introduce Bregman--Wasserstein distance for measures on $\mathcal{D}$:
\begin{equation} \label{BW}
W_E (\nu_1, \nu_2 ) \equiv \inf_{ \mu \in \Pi(\nu_1; \nu_2) } \int_{ \mathcal{D} \times \mathcal{D} }  \mathcal{E} \left( \vr, \vm \Big| \tvr, \tvm \right) 
\D \mu ([\vr, \vm, \vc{d}_B; \tvr, \tvm, \vc{d}_B]), 
\end{equation}
where
\[
\Pi (\nu_1; \nu_2) \equiv \left\{ \mu \in \mathfrak{P}(\mathcal{D} \times \mathcal{D}) \ \Big| \ \pi_1( \mu) = \nu_1,\ \pi_2 (\mu) = \nu_2 \right\}.
\]
Although formally similar to the conventional Wasserstein distance, $W_E$ is obviously not symmetric. 
Its specific form is, however, very convenient as the cost functional coincides with the relative energy appearing in the relative energy inequality for the 
Navier--Stokes system, cf. \cite[Section 6]{AbbFeiNov}. As we shall see below, convergence in $W_E$ will imply convergence in a suitable Wasserstein distance.

We introduce the set of \emph{regular trajectories},
\[
\begin{split}
\mathcal{T}_{L,T} = \Big\{ &[\vr, \vm, \vc{d}_B] \ \Big| \ [\vr, \vm]
\ \mbox{is a Lipschitz solution of the Navier--Stokes system in}\ [0,T] \times \Omega,\\
&\mbox{with the boundary data}\ \vc{d}_B,\ \inf_{(0,T) \times \Omega} \vr \geq L^{-1},\
\| [\vr, \vm] \|_{W^{1, \infty}(0,T) \times \Omega; R^{d + 1})} \leq L \Big\}
\end{split}
\]
Correspondingly, we define the space of \emph{regular initial data}
\begin{align*}
\mc D_{L,T}\,:=\,\Big\{[\vr(0),\vm(0),\vc{d}_B] \ \Big|& \quad [\vr,\vm,\vc{d}_B]\in\mc T_{L,T}\Big\}\,\subset
\mathcal{D}_R \subset \mathcal{D}.
\end{align*}

The following result can be seen as a sort of continuity property of the statistical solutions with respect to regular initial data.
For the sake of simplicity, we consider fixed (deterministic) boundary data $\widetilde{\vc{d}}_B$.
\begin{Theorem}[Continuity at regular data] \label{t:cont}
Let
\[
\widetilde{\vc{d}}_B = [\vr_B, \vu_B, \vc{g}] \in C(\partial \Omega) \times C^1(\Ov{\Omega}; R^d) \times C(\Ov{\Omega}; R^d),\  \inf_{\partial \Omega} \vr_B > 0
\]
be given data.
Let $\left\{ \nu_n \right\}_{n = 1}^\infty$, $\nu_n \in \mathfrak{P}({\mathcal{D}})$, $\nu\in
\mathfrak{P}({\mathcal{D}})$, be a family of probability measures satisfying
\begin{equation} \label{hyp:bound}
\begin{split}
{\rm supp}[\nu] &\subset\mc D_{L,T}
\ \mbox{for some}\ L,T > 0, \\ 
& \nu_n \left\{ \vc{d}_B = \widetilde{\vc{d}}_B \right\} = \nu \left\{ \vc{d}_B = \widetilde{\vc{d}}_B \right\} = 1.
\end{split}
\end{equation}
Let
\begin{equation} \label{hyp:bound1}
W_E \big(\nu_n,\nu\big)\,\longrightarrow\,0\qquad\mbox{ as }\quad n\ra \infty\,.
\end{equation}

Then
\begin{equation} \label{conclus}
\sup_{t\in[0,T]}W_E\big(M_t(\nu_n),M_t(\nu)\big)\,\longrightarrow\,0\qquad\mbox{ as }\quad n\ra \infty\,.
\end{equation}
\end{Theorem}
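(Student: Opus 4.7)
The plan is to transport a near-optimal Bregman--Wasserstein coupling of the initial data through the semiflow selection of Proposition~\ref{PrS2} and then control the resulting cost pointwise via a relative-energy estimate of weak--strong type.

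More precisely, fix $t\in[0,T]$ and choose $\mu_n\in\Pi(\nu_n;\nu)$ with
\[
\int_{\mathcal{D}\times\mathcal{D}}\mathcal{E}\left(\vr_0,\vm_0\,\Big|\,\tvr_0,\tvm_0\right)\,\D\mu_n\,\leq\,W_E(\nu_n,\nu)+\frac{1}{n}.
\]
By the second line of \eqref{hyp:bound}, $\mu_n$ is concentrated on pairs in which both boundary components equal $\widetilde{\vc{d}}_B$. I would then define
\[
\Psi_t\big([\vr_0,\vm_0,\widetilde{\vc{d}}_B],[\tvr_0,\tvm_0,\widetilde{\vc{d}}_B]\big)\,:=\,\big([\vr,\vm](t;\cdot),[\tvr,\tvm](t;\cdot)\big),
\]
where $[\vr,\vm]$ and $[\tvr,\tvm]$ are the finite energy weak solutions delivered by the Borel-measurable semiflow selection. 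The pushforward $(\Psi_t)_\ast\mu_n$ is an admissible coupling of $M_t(\nu_n)$ and $M_t(\nu)$, whence
\[
W_E\big(M_t(\nu_n),M_t(\nu)\big)\,\leq\,\int_{\mathcal{D}\times\mathcal{D}}\mathcal{E}\left(\vr(t),\vm(t)\,\Big|\,\tvr(t),\tvm(t)\right)\,\D\mu_n.
\]

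The pointwise bound on the integrand will come from the relative energy inequality of \cite[Section~6]{AbbFeiNov}. Since $\supp\nu\subset\mc D_{L,T}$, weak--strong uniqueness (Section~\ref{ss:cont} and \cite[Theorem~6.3]{AbbFeiNov}) forces the semiflow to deliver the unique Lipschitz solution $[\tvr,\tvm]$, satisfying $\|[\tvr,\tvm]\|_{W^{1,\infty}((0,T)\times\Omega)}\leq L$ and $\inf\tvr\geq L^{-1}$. Plugging this Lipschitz pair as a test field into the relative energy inequality satisfied by the weak solution $[\vr,\vm]$, the source/residual terms produced by $[\tvr,\tvu=\tvm/\tvr]$ vanish identically because $[\tvr,\tvm]$ is a classical solution; the inhomogeneous boundary contributions produced by the common data $\widetilde{\vc{d}}_B$ cancel across the two sides; and the surviving remainder is dominated by $C(L,T,\widetilde{\vc{d}}_B)\,\mathcal{E}(\vr,\vm\,|\,\tvr,\tvm)$. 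A Gronwall argument then gives, for every $t\in[0,T]$ and uniformly on $\supp\mu_n$,
\[
\mathcal{E}\left(\vr(t),\vm(t)\,\Big|\,\tvr(t),\tvm(t)\right)\,\leq\,e^{C(L,T,\widetilde{\vc{d}}_B)T}\,\mathcal{E}\left(\vr_0,\vm_0\,\Big|\,\tvr_0,\tvm_0\right).
\]

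Integrating this pointwise bound against $\mu_n$ and taking the supremum in $t\in[0,T]$ yields
\[
\sup_{t\in[0,T]}W_E\big(M_t(\nu_n),M_t(\nu)\big)\,\leq\,e^{C(L,T,\widetilde{\vc{d}}_B)T}\Big(W_E(\nu_n,\nu)+\tfrac{1}{n}\Big),
\]
which vanishes in the limit by \eqref{hyp:bound1}. The measurability requirements along the way---Borel measurability of $\Psi_t$ and of the integrand---are supplied by the joint measurability statement in Proposition~\ref{PrS2}. The main obstacle I foresee is the asymmetric relative-energy computation itself: one side is an arbitrary finite energy weak solution picked by the selection on $\supp\nu_n$ (on which no a priori regularity is known), the other is Lipschitz and carries the same inhomogeneous in/out-flux data. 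One must therefore carefully track the boundary contributions over $\Gamma_{\rm in}$ and $\Gamma_{\rm out}$ so that they cancel with those generated by the Lipschitz test pair, and verify that the Gronwall constant truly depends only on $L$, $T$ and $\widetilde{\vc{d}}_B$ rather than on the (a priori unbounded) energy of data drawn from $\supp\nu_n$.
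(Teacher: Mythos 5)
Your proposal is correct and follows essentially the same route as the paper: transport a (near-)optimal coupling of the initial measures through the measurable semiflow to obtain an admissible coupling of $M_t(\nu_n)$ and $M_t(\nu)$, then bound the transported cost pointwise by the relative energy (weak--strong) inequality of \cite[Section 6]{AbbFeiNov} together with Gr\"onwall, with the constant controlled uniformly by the $\mathcal{D}_{L,T}$ bound on ${\rm supp}[\nu]$. The technical points you flag at the end (boundary terms over $\Gamma_{\rm in}$, $\Gamma_{\rm out}$, and the absorption of the $\Div\mathbb{S}(\Ds\tvu)$ remainder via Korn's inequality) are exactly the steps the paper carries out explicitly, so no gap remains.
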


\begin{proof}
Let $(\vr,\vm)$ be a weak solution and $(\tvr,\tvm)$ a strong (Lipsichtz) solution of the problem \eqref{i1}--\eqref{i4}
in $(0,T) \times \Omega$ corresponding to the data $[\vr_0, \vm_0, \widetilde{\vc{d}}_B]$ and
$[\widetilde{\vr}_0, \widetilde{\vm}_0, \widetilde{\vc{d}}_B]$, respectively. In addition, suppose
$\inf_{(0,T) \times \Omega} \tvr > 0$. Accordingly, the velocity $\tvu \equiv \frac{\tvm}{\tvr}$ is well defined.

Exactly as in  \cite[Section 6]{AbbFeiNov}, we compute
\begin{align}
&\left[\intO{E\left(\vr,\vm\Big|\tvr,\tvm\right)}\right]_{t=0}^{t=\tau}+\int^\tau_0\intO{\Big(\mbb{S}(\Ds\vu)-\mbb S(\Ds\tvu)\Big):\big(\Ds\vu-\Ds\tvu\big)}\dt \label{est:relative}\\
&
+\int^\tau_0\int_{\Gamma_{\rm out}}{\Big(P(\vr)-P'(\tvr)(\vr-\tvr)-P(\tvr)\Big)\vu_B\cdot\vn}\ \D S_x\dt \nonumber \\
&\qquad\qquad\leq-\int^\tau_0\intO{\vr \left(\frac{\vm}{\vr}-\tvu \right)\cdot \left(\frac{\vm}{\vr}-\tvu\right)\cdot\nabla_x\tvu}\dt \nonumber \\
&\qquad\qquad\qquad\qquad-\int^\tau_0\intO{\Big(p(\vr)-p'(\tvr)(\vr-\tvr)-p(\tvr)\Big)\ \Div\tvu}\dt \nonumber \\
&\qquad\qquad\qquad\qquad\qquad\qquad\qquad+\int^\tau_0\intO{\left(\frac{\vr}{\tvr}-1\right)\ \left(\frac{\vm}{\vr}-\tvu \right)\cdot\Div\mbb{S}(\Ds\tvu)}\dt.\nonumber
\end{align}
Next,
\[
\left|\int^\tau_0\intO{\vr \left(\frac{\vm}{\vr}-\tvu\right)\cdot \left(\frac{\vm}{\vr}-\tvu \right)\cdot\nabla_x\tvu}\dt\right|\,\leq\,
\frac{1}{2} \left\|\nabla_x\tvu\right\|_{L^\infty((0,T) \times \Omega; R^{d \times d})}\,\int^\tau_0\intO{E\left(\vr,\vm\Big|\tvr,\tvm\right)}\dt,
\]
and, in view of hypothesis \eqref{MH2},
\[
\begin{split}
&\left| \int^\tau_0\intO{\Big(p(\vr)-p'(\tvr)(\vr-\tvr)-p(\tvr)\Big)\ \Div\tvu}\dt \right| \\ &\leq c(\gamma) \left\|\nabla_x\tvu\right\|_{L^\infty((0,T) \times \Omega; R^{d \times d})}\,\int^\tau_0\intO{E\left(\vr,\vm\Big|\tvr,\tvm\right)}\dt.
\end{split}
\]
Finally, arguing as in \cite{AbbFeiNov}, we obtain
\begin{align*}
&\left|\int^\tau_0\intO{\left(\frac{\vr}{\tvr}-1\right)\ \left(\frac{\vm}{\vr}-\tvu \right)\cdot\Div\mbb{S}(\Ds\tvu)}\dt\right| \\
&\qquad\qquad
\leq\,c(\delta, \gamma) \left\|\Div\mbb{S}(\Ds\tvu) \right\|_{L^\infty((0,T) \times \Omega; R^d)}   \,\int^\tau_0\intO{E\left(\vr,\vm\Big|\tvr,\tvm\right)}\dt\,+\,\de\int^\tau_0\left\|\nabla_x(\vu-\tvu)\right\|^2_{L^2}\dt\,,
\end{align*}
for any $\de>0$. Thus if $\delta$ is small enough, the last integral on the right--hand side may be absorbed by the second term on the left--hand side
of \eqref{est:relative} via Korn's inequality.
We conclude applying Gr\"onwall's lemma:
\begin{align} \label{est:rel_fin}
&\intO{E\left(\vr,\vm\Big|\tvr,\tvm \right)(\tau)}+\int^\tau_0\left\|\Grad \vu-\Grad \tvu\right\|^2_{L^2}\dt\,\leq\,G\,\intO{E\left(\vr_0,\vm_0\Big|\tvr_0,\tvm_0\right)}
\end{align}
for any $0\leq\tau\leq T$, where
\[
G = G\left( T; \| \Grad \tvu \|_{L^\infty((0,T) \times \Omega; R^{d \times d})} ; \| \Div \mathbb{S}( \Ds \tvu )  \|_{L^\infty((0,T) \times \Omega; R^d)}
\right).
\]

At this point, we identify
\[
[\vr (t, \cdot), \vm(t, \cdot)] \approx M_t [\nu_n] (\delta_{[\vr_0, \vm_0, \widetilde{\vc{d}}_B ]}),\
[\tvr (t, \cdot), \tvm(t, \cdot)] \approx M_t [\nu] (\delta_{[\tvr_0, \tvm_0, \widetilde{\vc{d}}_B ]}),\ t \geq 0.
\]
As $[\tvr_0, \tvm_0, \widetilde{\vc{d}}_B ] \in \mathcal{D}_{L,T} \ \nu - \mbox{a.s.}$ we deduce from
\eqref{est:rel_fin} that
\begin{equation} \label{estimm}
\intO{E\left(\vr,\vm\Big|\tvr,\tvm \right)(\tau)} \leq c(L,T) \intO{E\left(\vr_0,\vm_0 \Big|\tvr_0,\tvm_0 \right)}
\end{equation}
for all $0 \leq \tau < T$ as soon as
\begin{equation} \label{estimm1}
[\vr_0, \vm_0, \widetilde{\vc{d}}_B] \in {\rm supp}[\nu_n],\
[\tvr_0 ,\tvm_0, \widetilde{\vc{d}}_B] \in {\rm supp}[\nu].
\end{equation}

In view of \eqref{estimm}, \eqref{estimm1}, and Disintegration Theorem, we may apply $\mu \in \Pi (\nu_n; \nu)$ to \eqref{estimm} obtaining
\begin{align}
&\int_{\mc D\times\mc D}\intO{E\left(\vr,\vm\Big|\tvr,\tvm\right)(\tau)}\,\D\mu[\vr_0,\vm_0,\tvr_0\tvm_0,\widetilde{\vc{d}}_B] \label{est:rel-meas} \\
&\qquad\qquad\qquad\qquad\qquad
\leq\,c(L,T) \,\int_{\mc D\times\mc D}\intO{E\left(\vr_0,\vm_0\Big|\tvr_0,\tvm_0\right)}\,\D\mu[\vr_0,\vm_0,\tvr_0,\tvm_0,\widetilde{\vc{d}}_B]; \nonumber
\end{align}
whence
\begin{align}
\inf_{ \mu \in \Pi(\nu_n; \nu)} &\int_{\mc D\times\mc D}\intO{E\left(\vr,\vm\Big|\tvr,\tvm\right)(\tau)}\,\D\mu[\vr_0,\vm_0,\tvr_0\tvm_0,\widetilde{\vc{d}}_B] \label{estim3} \\
&\qquad
\leq\,c(L,T) \inf_{ \mu \in \Pi(\nu_n; \nu)}\,\int_{\mc D\times\mc D}\intO{E\left(\vr_0,\vm_0\Big|\tvr_0,\tvm_0\right)}\,\D\mu[\vr_0,\vm_0,\tvr_0\tvm_0,\widetilde{\vc{d}}_B]; \nonumber
\end{align}
In accordance with \eqref{hyp:bound1},
\[
\inf_{ \mu \in \Pi(\nu_n; \nu)}\,\int_{\mc D\times\mc D}\intO{E\left(\vr_0,\vm_0\Big|\tvr_0,\tvm_0\right)}\,\D\mu[\vr_0,\vm_0,\tvr_0,\tvm_0,\widetilde{\vc{d}}_B]
\to 0 \ \mbox{as}\ n \to \infty;
\]
which implies
\begin{equation} \label{estim4}
\inf_{ \mu \in \Pi(\nu_n; \nu)} \int_{\mc D\times\mc D}\intO{E\left(\vr,\vm\Big|\tvr,\tvm\right)(\tau)}\,\D\mu[\vr_0,\vm_0,\tvr_0,\tvm_0,\widetilde{\vc{d}}_B]
\ \to 0 \ \mbox{as} \ n \to \infty
\end{equation}
uniformly for $\tau \in (0,T)$.

Finally,
\[
\begin{split}
W_E (M_t(\nu_n), M_t(\nu) ) = 
\inf_{ \widetilde{\mu} \in \Pi(M_t(\nu_n); M_t(\nu))} &\int_{\mc D\times\mc D}{ \mathcal{E}\left(\vr,\vm\Big|\tvr,\tvm\right)}\,\D
\widetilde{\mu}[\vr,\vm,\tvr, \tvm,\widetilde{\vc{d}}_B] \\ &\leq
\inf_{ \mu \in \Pi(\nu_n; \nu)} \int_{\mc D\times\mc D}\intO{E\left(\vr,\vm\Big|\tvr,\tvm\right)(\tau)}\,\D\mu[\vr_0,\vm_0,\tvr_0,\tvm_0,\widetilde{\vc{d}}_B],
\end{split}
\]
which completes the proof.
\end{proof}

\begin{Remark} \label{Wass}

Consider the space $\widetilde{\mathcal{D}}$ -- the space of data $\mathcal{D}$ endowed with the topology of 
\[
\widetilde{X}_D = L^\gamma(\Omega) \times L^{\frac{2 \gamma}{\gamma + 1}}(\Omega; R^d) \times C(\partial \Omega) \times
C^1(\Ov{\Omega}; R^d) \times C(\Ov{\Omega}; R^d).
\]
As
\[
[\tvr_0, \tvm_0, \widetilde{\vc{d}}_B] \in \mathcal{D}_{L,T},
\]
there is a (deterministic) constant $r > 0$ such that
\[
0 < r^{-1} \leq \tvr (t, \cdot) \leq r \ \nu - \mbox{a.s.}
\]
In particular,
\begin{equation} \label{estim5}
\begin{split}
E \left( \vr, \vm \Big| \tvr , \tvm \right) &\geq c(r) \left( |\vr - \tvr|^2 + |\vm - \tvm |^2 \right) \ \mbox{if}\
\frac{1}{2} r^{-1} \leq \vr \leq 2 r \\
E \left( \vr, \vm \Big| \tvr , \tvm \right) &\geq c(r) \left( 1 + \vr^\gamma + \frac{|\vm|^2}{\vr} \right) \ \mbox{otherwise}.
\end{split}
\end{equation}
It is a routine matter to show that \eqref{conclus}, together with \eqref{estim5}, imply convergence in a conventional Wasserstein norm, 
\[
\sup_{t\in[0,T]}W_q^{\widetilde{\mathcal{D}}} \big(M_t(\nu_n),M_t(\nu)\big)\,\longrightarrow\,0\qquad\mbox{ as }\quad n\ra \infty,\ 
1 \leq q < \frac{2 \gamma}{\gamma + 1}.
\]

\end{Remark}

\subsection{Maximal solutions}

The selection procedure hidden in the proof of Proposition \ref{SSP1} can be arranged in such a way that the selected
semiflow enjoys the property of \emph{maximal dissipation}. Given two finite energy weak solutions $[\vr_1, \vm_1]$,
$[\vr_2, \vm_2]$ corresponding to the same data
\[
[\vr_0, \vm_0], \ \mathcal{E}_0 = \intO{ E\left(\vr_0, \vm_0 \Big| \vu_B \right)}, \ \mbox{with the boundary data}\ \vc{d}_B,
\]
we introduce the relation $\prec$,
\[
[\vr_1, \vm_1] \prec [\vr_2, \vm_2]\ \Leftrightarrow \ \intO{ E\left(\vr_1, \vm_1 \Big| \vu_B \right) (t, \cdot) }
\leq \intO{ E\left(\vr_2, \vm_2 \Big| \vu_B \right)(t, \cdot) }
\ \mbox{for a.a.}\ t \in (0, \infty).
\]

\begin{Definition}[Maximal solution] \label{Dmax1}

Let the data
\[
[\vr_0, \vm_0], \ \mathcal{E}_0 = \intO{ E\left(\vr_0, \vm_0 \Big| \vu_B \right)}, \ \mbox{with the boundary data}\ \vc{d}_B,
\]
be given. We say that an associated finite energy weak solution $[\vr, \vm]$ is \emph{maximal} is it is minimal with respect
to the relation $\prec$.

\end{Definition}

Maximal solutions comply with the physical principle of maximal energy dissipation. It turns out that the semiflow selection
obtained in Propositions \ref{PrS1}, \ref{PrS2} can be constructed to consist of maximal solutions. A short inspection of the
proof in \cite{BreFeiHof19}, \cite{CorKap} reveals that the semiflow is constructed as a limit of successive minimization of
functionals of the type
\[
F([\vr, \vm, \Ecd]) = \int_0^\infty \exp (-\lambda t) \beta \left( [\vr, \vm, \Ecd] (t) \right) \dt,\ \lambda > 0,
\]
where $\beta \in BC(W^{-k,2}(\Omega) \times W^{-k,2}(\Omega) \times R)$. Consequently, minimizers of
\[
F([\vr, \vm, \Ecd]) = \int_0^\infty \exp (-\lambda t) {\rm arctng} \left( \Ecd (t) \right) \dt,\ \lambda > 0,
\]
are definite maximal in accordance with Definition \ref{Dmax1}.

We conclude by stating a property of maximal solutions that is of interest if the total energy is a Lyapunov function.

\begin{Theorem} \label{Tmax1}
Let the data
\[
[\vr_0, \vm_0], \ \mathcal{E}_0 = \intO{ E\left(\vr_0, \vm_0 \Big| \vu_B \right)}, \ \mbox{with the boundary data}\ \vc{d}_B,
\]
be given. Suppose that the total energy $\Ecd$ associated to \emph{any} finite energy weak solution
is non--increasing, in particular it
admits a limit
\[
\Ecd(t) \to E_\infty < \infty \ \mbox{as}\ t \to \infty.
\]

Suppose that $[\vr, \vm]$ is maximal. Then
\[
\intO{ E \left( \vr, \vm \Big| \vu_B \right) (t, \cdot) } \to E_\infty \ \mbox{as}\ t \to \infty.
\]

\end{Theorem}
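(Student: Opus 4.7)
The plan is to argue by contradiction, combining the maximality of $[\vr,\vm]$ with the existence result (Proposition \ref{SP1}) and the continuation axiom [A5] from Section \ref{S}. First I would record the elementary pointwise bound $\mathcal{E}(t)\leq \Ecd(t)$ from \eqref{MR4}, which together with $\Ecd(t)\to E_\infty$ already yields $\limsup_{t\to\infty}\mathcal{E}(t)\leq E_\infty$. Since $\Ecd$ is non-increasing with limit $E_\infty$, one has $\Ecd(t)\geq E_\infty$ for every $t\geq 0$, and the identity $\mathcal{E}(t)=\Ecd(t)$ at a.a.\ $t$ (Remark \ref{MR4}) gives $\mathcal{E}(t)\geq E_\infty$ for a.a.\ $t>0$. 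The only way the conclusion can fail is that on the exceptional null set where $\mathcal{E}<\Ecd$ the value $\mathcal{E}$ dips strictly below $E_\infty$ along a sequence $t_n\to\infty$.

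Assuming this, fix $\delta>0$ and a large $n$ with $\mathcal{E}(t_n)\leq E_\infty-\delta$, and put $T:=t_n$. By Proposition \ref{SP1} applied to the data $[\vr(T,\cdot),\vm(T,\cdot),\mathcal{E}(T),\vc{d}_B]\in\mathcal{D}_E$, there exists a finite energy weak solution $[\tvr,\tvm]$ on $[0,\infty)$ with associated energy $\widetilde{\Ecd}$ satisfying $\widetilde{\Ecd}(0-)=\mathcal{E}(T)$. The standing hypothesis of the theorem, applied to this new solution, forces $\widetilde{\Ecd}$ to be non-increasing, so
\[
\widetilde{\Ecd}(t)\;\leq\;\mathcal{E}(T)\;\leq\;E_\infty-\delta\qquad\mbox{for every }t\geq 0.
\]
By axiom [A5], the concatenation
\[
[\vr^{*},\vm^{*}](t,\cdot):=\begin{cases} [\vr,\vm](t,\cdot) & 0\leq t\leq T,\\ [\tvr,\tvm](t-T,\cdot) & t>T, \end{cases}
\]
is again a finite energy weak solution attached to the original data $[\vr_0,\vm_0,\mathcal{E}_0,\vc{d}_B]$.

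Finally, I would compare $\mathcal{E}^{*}$ with $\mathcal{E}$. On $[0,T]$ they coincide by construction, while on $(T,\infty)$ one has $\mathcal{E}^{*}(t)\leq \widetilde{\Ecd}(t-T)\leq E_\infty-\delta$. Combining this with the a.e.\ lower bound $\mathcal{E}(t)\geq E_\infty$ established in the first paragraph yields
\[
\mathcal{E}^{*}(t)\;\leq\;E_\infty-\delta\;<\;E_\infty\;\leq\;\mathcal{E}(t)\qquad\mbox{for a.a. }t>T.
\]
Thus $[\vr^{*},\vm^{*}]\prec[\vr,\vm]$ holds, with strict inequality on a set of infinite measure, so $[\vr,\vm]$ is not minimal with respect to $\prec$, contradicting its maximality in the sense of Definition \ref{Dmax1}.

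The step I expect to require the most care is the construction of the ``restart'' $[\tvr,\tvm]$ and its splicing with $[\vr,\vm]$: one has to be sure that Proposition \ref{SP1} really delivers a solution whose initial càglàd energy matches the actual energy $\mathcal{E}(T)$ (and not merely $\Ecd(T)$, which could be strictly larger), since this is precisely the gap that lets the concatenation lower the energy; and one must check that the hypothesis ``every finite energy weak solution has non-increasing $\Ecd$'' is applicable globally so that $\widetilde{\Ecd}$ stays below $E_\infty-\delta$ for all time, not merely near $t=0$. With these points secured, the rest of the argument reduces to a direct contradiction with maximality.
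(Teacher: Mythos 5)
Your proposal is correct and follows essentially the same route as the paper: establish $\limsup_{t\to\infty}\mathcal{E}(t)\leq E_\infty$ from $\mathcal{E}\leq\Ecd$, then argue by contradiction by restarting the flow at a time $T$ where $\mathcal{E}(T)<E_\infty$ with the \emph{true} energy $\mathcal{E}(T)$ as new initial energy (Proposition \ref{SP1}), concatenating via the continuation property, and contradicting minimality with respect to $\prec$. The point you flag as delicate — that the restarted solution carries initial energy $\mathcal{E}(T)$ rather than $\Ecd(T)$ — is exactly the mechanism the paper exploits, and is licensed by the form of axiom [A5].
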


\begin{proof}

In view of the inequality
\[
\Ecd(t) \geq \intO{ E \left( \vr, \vm \Big| \vu_B \right) (t, \cdot) } \ \mbox{for any}\ t \geq 0,
\]
obviously
\[
\limsup_{t \to \infty} \intO{ E \left( \vr, \vm \Big| \vu_B \right) (t, \cdot) } \leq E_\infty.
\]

Consequently, it is enough to show
\[
E_\infty \leq \intO{ E \left( \vr, \vm \Big| \vu_B \right) (T, \cdot) } \ \mbox{for any}\ T > 0.
\]
Assuming the contrary we find $T > 0$ such that
\begin{equation} \label{Mmax}
\intO{ E \left( \vr, \vm \Big| \vu_B \right) (T, \cdot) } < E_\infty \leq \Ecd(t) \ \mbox{for all}\ t \geq 0.
\end{equation}
In accordance with Proposition \ref{SP1}, we may construct a solution
$[\tvr, \tvm]$ such that
\[
[\tvr, \tvm](T, \cdot) = [\vr, \vm](T, \cdot),\ \intO{ E \left( \tvr, \tvm \Big| \vu_B \right)(T, \cdot) } =
\intO{ E \left( \vr, \vm \Big| \vu_B \right)(T, \cdot) },
\]
in particular, as the total energy is non--increasing,
\[
\intO{ E \left( \tvr, \tvm \Big| \vu_B \right)(t, \cdot) } \leq
\intO{ E \left( \vr, \vm \Big| \vu_B \right) (T, \cdot) } < \Ecd(t) \ \mbox{for all} \ t \geq T.
\]
Then, we may construct a new solution,
\[
[\widehat{\vr}, \widehat{\vm}] (t, \cdot) =
\left\{ \begin{array}{l} {[} \vr, \vm] (t, \cdot) \ \mbox{if}\ t \in [0,T], \\
{[} \tvr, \tvm] (t, \cdot) \ \mbox{if}\ t \in (T, \infty), \end{array} \right.
\]
with the property
\[
\begin{split}
\intO{ E \left( \widehat{\vr}, \widehat{\vm} \Big| \vu_B \right) (t, \cdot) } &=
\intO{ E \left( {\vr}, {\vm} \Big| \vu_B \right) (t, \cdot) },\ t \in [0,T) \\
\intO{ E \left( \widehat{\vr}, \widehat{\vm} \Big| \vu_B \right) (t, \cdot) } &<
\intO{ E \left( {\vr}, {\vm} \Big| \vu_B \right) (t, \cdot) } \ \mbox{for a.a.} \ t \in (T, \infty),
\end{split}
\]
in contrast with maximality of $[\vr, \vm]$.
\end{proof}

If $\vc{g} = \Grad G(x)$, $\vc{u}_B = 0$, it is possible to incorporate the term
\[
\intO{ \vr \vu \cdot \vc{g} } = \intO{ \vr \vu \cdot \Grad F } = \frac{{\rm d}}{{\rm d}t} \intO{\vr G}
\]
in the total energy, where the latter is indeed non--increasing in view of \eqref{M3}. More sophisticated examples when the
energy is a Lyapunov function can be constructed even for non--zero $\vu_B$ satisfying $\Ds \vu_B = 0$.
\def\cprime{$'$} \def\ocirc#1{\ifmmode\setbox0=\hbox{$#1$}\dimen0=\ht0
  \advance\dimen0 by1pt\rlap{\hbox to\wd0{\hss\raise\dimen0
  \hbox{\hskip.2em$\scriptscriptstyle\circ$}\hss}}#1\else {\accent"17 #1}\fi}


\end{document}